% cat author_final_05-15.tex | aspell list -t | sort | uniq
%%%%%%%%%%%%%%%%%%%% author.tex %%%%%%%%%%%%%%%%%%%%%%%%%%%%%%%%%%%
%
% sample root file for your "contribution" to a contributed volume
%
% Use this file as a template for your own input.
%
%%%%%%%%%%%%%%%% Springer %%%%%%%%%%%%%%%%%%%%%%%%%%%%%%%%%%
\documentclass[graybox]{svmult}

\smartqed
\usepackage{mathptmx}       % selects Times Roman as basic font
\usepackage{helvet}         % selects Helvetica as sans-serif font
\usepackage{courier}        % selects Courier as typewriter font
\usepackage{type1cm}        % activate if the above 3 fonts are
                            % not available on your system
\usepackage{graphicx}       % standard LaTeX graphics tool
                            % when including figure files

\usepackage{array,colortbl}
\usepackage{amsmath,amsfonts,amssymb,bm} % no amsthm, Springer defines Theorem, Lemma, etc themselves
\DeclareFontFamily{U}{mathx}{\hyphenchar\font45}
\DeclareFontShape{U}{mathx}{m}{n}{
      <5> <6> <7> <8> <9> <10>
      <10.95> <12> <14.4> <17.28> <20.74> <24.88>
      mathx10
      }{}
\DeclareSymbolFont{mathx}{U}{mathx}{m}{n}
\DeclareFontSubstitution{U}{mathx}{m}{n}
\DeclareMathAccent{\widecheck}      {0}{mathx}{"71}

% Note that Springer defines the following already:
%
% \D upright d for differential d
% \I upright i for imaginary unit
% \E upright e for exponential function
% \tens depicts tensors as sans serif upright
% \vec depicts vectors as boldface characters instead of the arrow accent
%
% Additionally we throw in the following common used macro's:
 \newcommand{\Z}{\mathbb{Z}} % integers
 % complex numbers
\newcommand{\R}{\mathbb{R}} % reals
 % natural numbers {1, 2, ...}
 % rationals
 % field, finite field
\newcommand{\T}{\mathbb{T}} % field, finite field

 % floor
    % ceil
\newcommand{\rd}{\,\mathrm{d}} % differential symbol for use in integrals
% vectors as boldsymbols:
 % vector of zeros
  % vector of ones
    % vector t
    % vector t
    % vector t
    % vector t
    % vector t
    % vector t
    % vector t
    % vector h
    % vector k
    % vector v
\newcommand{\bsv}{\boldsymbol{v}}    % vector v
\newcommand{\bsw}{\boldsymbol{w}}    % vector w
\newcommand{\bsx}{\boldsymbol{x}}    % vector x
\newcommand{\bsy}{\boldsymbol{y}}    % vector y
    % vector z
    % amtrix A
\newcommand{\bsB}{\boldsymbol{B}}    % amtrix A
    % amtrix A

\newcommand{\bsH}{\boldsymbol{H}} 
    % amtrix A
    % amtrix A
\newcommand{\bslambda}{\boldsymbol{\lambda}}

\newcommand{\bsmu}{\boldsymbol{\mu}} 

% sets as Euler fraks:

% indicator boldface 1:
\DeclareSymbolFont{bbold}{U}{bbold}{m}{n}
\DeclareSymbolFontAlphabet{\mathbbold}{bbold}

%calligraphic letters

\newcommand{\cH}{\ensuremath{\mathcal{H}}}
\newcommand{\cL}{\ensuremath{\mathcal{L}}}

\newcommand{\cP}{\ensuremath{\mathcal{P}}}
\newcommand{\satop}[2]{\stackrel{\scriptstyle{#1}}{\scriptstyle{#2}}}

\DeclareMathOperator*{\argmin}{arg\,min}

 %neu hinzugefuegt / geaendert.
% \usepackage{ulem}
	%gestrichen

\usepackage{microtype} % good font tricks

\usepackage[colorlinks=true,linkcolor=black,citecolor=black,urlcolor=black]{hyperref}
\urlstyle{same}
\usepackage{bookmark}
\pdfstringdefDisableCommands{\def\and{, }}
\makeatletter % to avoid hyperref warnings:
  \providecommand*{\toclevel@author}{999}
  \providecommand*{\toclevel@title}{0}
\makeatother

\usepackage[english,ruled]{algorithm2e}

%%%%%%%%%%%%%%%%%%%%%%%%%%%%%%%%%%%%%%%%%%%%%%%%%%%%%%%%%%%%%%%%%%%%%%%%%%%%%%%%%%%%%%%%%
\begin{document}

\title*{Optimal point sets for quasi--Monte Carlo integration  of bivariate periodic functions with bounded mixed derivatives}
\titlerunning{Optimal point sets for quasi--Monte Carlo integration  of bivariate periodic functions}
% for an abbreviated version of  your contribution title if the original one is too long
\author{Aicke Hinrichs \and Jens Oettershagen}
% Use \authorrunning{Short Title} for an abbreviated version of
% your contribution title if the original one is too long
\institute{
Aicke Hinrichs 
\at Institut f\"ur Analysis, Johannes-Kepler-Universit\"at Linz, Altenberger Stra\ss e 69, 4040 Linz, Austria \\
\email{aicke.hinrichs@uni-rostock.de} 
\and 
Jens Oettershagen 
\at Institute for Numerical Simulation, Wegelerstra\ss e 6, 53115 Bonn, Germany \\
\email{oettershagen@ins.uni-bonn.de}
}
\maketitle

\abstract{We investigate quasi-Monte Carlo (QMC) integration of bivariate periodic functions with dominating mixed smoothness of order one. While there exist several QMC constructions which asymptotically yield the optimal rate of convergence of $\mathcal{O}(N^{-1}\log(N)^{\frac{1}{2}})$, it is yet unknown which point set is optimal in the sense that it is a global minimizer of the worst case integration error.
We will present a computer-assisted proof by exhaustion  that the Fibonacci lattice is the unique minimizer of the QMC worst case error in periodic $H^1_\text{mix}$ for small Fibonacci numbers $N$. Moreover, we investigate the situation for point sets whose cardinality $N$ is not a Fibonacci number. It turns out that for $N=1,2,3,5,7,8,12,13$ the optimal point sets are integration lattices.}

\section{Introduction}

Quasi-Monte Carlo (QMC) rules are equal-weight quadrature rules which can be used to approximate integrals defined on the $d$-dimensional unit cube $[0,1)^d$
\begin{equation*}
\int_{[0,1)^d} f(\bsx) \,\mathrm{d} \bsx \approx \frac{1}{N} \sum_{i=1}^{N} f(\bsx_i),
\end{equation*}
where $\cP_N=\{\bsx_1,\bsx_2,\ldots,\bsx_{N}\}$ are deterministically chosen quadrature points in $[0,1)^d$.
The integration error for a specific function $f$ is given as
\begin{equation*}
\left| \int_{[0,1)^d} f(\bsx) \,\mathrm{d} \bsx - \frac{1}{N} \sum_{i=1}^{N} f(\bsx_i) \right|.
\end{equation*}
To study the behavior of this error as $N$ increases for $f$ from a Banach space $(\cH, \|\cdot\|)$ one considers the worst case error
\begin{equation*}
{\rm wce}(\cH,\cP_N)=\sup_{\satop{f \in \cH}{\|f\| \le 1}} \left| \int_{[0,1)^d} f(\bsx) \,\mathrm{d} \bsx - \frac{1}{N} \sum_{i=1}^{N} f(\bsx_i) \right|.
\end{equation*}
Particularly nice examples of such function spaces are reproducing kernel Hilbert spaces \cite{Aronszajn_1950}. 
Here, we will consider the reproducing kernel Hilbert space $H^1_\text{mix}$ of 1-periodic functions with mixed smoothness. Details on these spaces are given in Section 2. The reproducing kernel is a
tensor product kernel of the form
$$ K_{d,\gamma}(\bsx,\bsy) =  \prod_{j=1}^d K_{1,\gamma} (x_j,y_j) \ \mbox{for} \ \bsx=(x_1,\dots,x_d),\bsy=(y_1,\dots,y_d) \in [0,1)^d$$
with
$ K_{1,\gamma}(x,y)= 1 + \gamma k(|x - y|)$ and $k(t)=\frac{1}{2}(t^2-t+\frac{1}{6})$ and a parameter $\gamma>0$.
It turns out that minimizing the worst case error ${\rm wce}(H^1_\text{mix},\cP_N)$ among all $N$-point sets  $\cP_N=\{\bsx_1,\ldots,\bsx_{N}\}$ with respect to the Hilbert space norm 
corresponding to the kernel  $K_{d,\gamma}$ is equivalent to minimizing the double sum
$$ G_\gamma ( \bsx_1,\ldots,\bsx_{N} ) = \sum_{i,j=1}^N K_{d,\gamma}(\bsx_i,\bsx_j).$$
There is a general connection between the discrepancy of a point set and the worst case error of integration. Details can be found in \cite[Chapter 9]{NW10}. 
In our case, the relevant notion is the $L_2$-norm of the periodic discrepancy. 
We describe the connection in detail in Section \ref{sec_dis}.

There are many results on the rate of convergence of worst case errors and of the optimal discrepancies for $N\to \infty$, see e.g. \cite{Niederreiter,NW10}, 
but results on the optimal point configurations for fixed $N$ and $d>1$ are scarce.
For discrepancies, we are only aware of \cite{W77}, where the point configurations minimizing the standard  $L_\infty$-star-discrepancy for $d=2$ and $N=1,2,\dots,6$
are determined, \cite{PVC06}, where for $N=1$ the point minimizing the standard  $L_\infty$- and $L_2$-star discrepancy for $d \ge 1$ is found, and 
\cite{LP07}, where this is extended to $N=2$.

It is the aim of this paper to provide a method which for $d=2$ and $N>2$ yields the optimal points for the periodic $L_2$-discrepancy and worst case error in $H^1_\text{mix}$. 
Our approach is based on a decomposition of the global optimization problem into exponentially many local ones which each possess unique solutions that can be approximated efficiently by a nonlinear block Gau\ss-Seidel method. Moreover, we use the symmetries of the two-dimensional torus to significantly reduce the number of local problems that have to be considered.

It turns out that in the case that $N$ is a (small) Fibonacci number, the Fibonacci lattice yields the optimal point configuration.
It is common wisdom, see e.g. \cite{BTY12,NS84,SJ94,SZ82}, that the Fibonacci lattice provides a very good point set for integrating periodic functions. Now our results support the
conjecture that they are actually the best points.  

These results may suggest that the optimal point configurations are integration lattices or at least lattice point sets. This seems to be true for some numbers $N$ of points, for example
for Fibonacci numbers, but not always. However, it can be shown that integration lattices are always {\em local} minima of ${\rm wce}(H^1_\text{mix},\cP_N)$. 
% A two-dimensional integration lattice is a set of $N$ points given as
% $$ \left\{  \left( \frac{i}{N}, \frac{i \bsg}{N} \mod 1 \right) \,:\, i=0,\dots,N-1\right\} $$
% for some $\bsg\in \{1,\dots,N-1\}$ coprime to $N$.
Moreover, our numerical results also suggest that for small $\gamma$ the optimal points are always \emph{close} to a lattice point set, i.e. $N$-point sets of the form
$$ \left\{  \left( \frac{i}{N}, \frac{\sigma(i)}{N} \right) \,:\, i=0,\dots,N-1\right\} ,$$
where $\sigma$ is a permutation of $\{0,1,\dots,N-1\}$. 

The remainder of this article is organized as follows: In Section 2 we recall Sobolev spaces with bounded mixed derivatives, the notion of the worst case integration error in reproducing kernel Hilbert spaces and the connection to periodic discrepancy. 
In  Section 3 we discuss necessary and sufficient conditions for optimal point sets and derive lower bounds of the worst case error on certain local patches of the whole $[0,1)^{2N}$. In Section 4 we compute candidates for optimal point sets up to machine precision. Using arbitrary precision rational arithmetic we prove that they are indeed near the global minimum which also turns out to be unique up to torus-symmetries. 
For certain point numbers the global minima are integration lattices as is the case if $N$ is a Fibonacci number.
We close with some remarks in Section 5.

\section{Quasi--Monte Carlo Integration in $H^1_\text{mix}(\mathbb{T}^2)$}

\subsection{Sobolev Spaces of Periodic Functions}
We consider univariate 1-periodic functions $f: \R \rightarrow \R$ which are given by their values on the torus $\T = [0,1)$.
For $k\in\Z$, the $k$-th Fourier coefficient of a function $f \in L_2(\T)$ is given by $\hat{f}_k = \int_0^1 f(x) \exp(2\pi \mathrm{i} \, k x) \, \rd x$. 
The definition
\begin{equation}
	\|f\|_{H^{1,\gamma}}^2 = \hat{f}_0^2 + \gamma \sum_{k \in \Z} |2\pi k|^2 \hat{f}_k^2 =  \left(\int_{\T} f(x) \, \rd x\right)^2 + \gamma \int_{\T} f'(x)^2 \, \rd x
\end{equation}
for a function $f$ in the univariate Sobolev space $H^1(\T) = W^{1,2}(\T) \subset L_2(\T)$ of functions with first weak derivatives bounded in $L_2$
gives a Hilbert space norm $\|f\|_{H^{1,\gamma}}$ on $H^1(\T)$ depending on the parameter $\gamma>0$.
The corresponding inner product is given by
\[
	(f,g)_{H^{1,\gamma}(\mathbb{T})} = \left( \int_0^1 f(x)\, \rd x \right) \left( \int_0^1  g(x) \, \rd x \right) + \gamma \int_0^1 f'(x) g'(x) \, \rd x .
\]
We denote the Hilbert space $H^1(\T)$ equipped with this inner product by $H^{1,\gamma}(\T)$.

Since $H^{1,\gamma}(\T)$ is continuously embedded in $C^0(\T)$ it is a reproducing kernel Hilbert space (RKHS), see \cite{Aronszajn_1950}, with a symmetric and positive definite kernel \linebreak
$K_{1,\gamma}: \T \times \T \rightarrow \R$, given by  \cite{Wahba75}
\begin{equation}
    \begin{aligned}
	K_{1,\gamma}(x,y) := & 1 + \gamma \sum_{k \in \mathbb{Z} \setminus \{0\}} |2\pi k|^{-2} \exp(2\pi \mathrm{i} k (x-y)) \\
		  = & 1+ \gamma k(|x-y|) ,
	\end{aligned}
\end{equation}
where $k(t) = \frac{1}{2}(t^2-t+\frac{1}{6})$ is the Bernoulli polynomial of degree two divided by two.

This kernel has the property that it reproduces point evaluations in $H^1$, i.e. \linebreak $f(x) = (f(\cdot),K(\cdot, x))_{H^{1,\gamma}}$ for all $f \in H^1$.
The reproducing kernel of the tensor product space $H^{1,\gamma}_\text{mix}(\T^2) := H^1(\mathbb{T}) \otimes  H^1(\mathbb{T}) \subset C(\mathbb{T}^2)$ is the product of the univariate kernels, i.e.
\begin{equation}
    \begin{aligned}
	K_{2,\gamma}(\bsx,\bsy) = & K_{1,\gamma}(x_1,y_1) \cdot K_{1,\gamma}(x_2,y_2) \\
		       = & 1 + \gamma k(|x_1 - y_1|) + \gamma k(|x_2 - y_2|) + \gamma^2 k(|x_1 - y_1|) k(|x_2 - y_2|) .
	\end{aligned}
\end{equation}

\subsection{Quasi--Monte Carlo Cubature} \label{sec_wce}
A linear cubature algorithm $Q_N(f) := \frac{1}{N} \sum_{i=1}^{N} f(\bsx_i)$ with uniform weights $\frac{1}{N}$ on a point set $\cP_N=\{\bsx_1,\dots,\bsx_{N}\}$ is called a QMC cubature rule.
Well-known examples for point sets used in such quadrature methods are digital nets, see e.g. \cite{DickPillich, Niederreiter}, and lattice rules \cite{SJ94}. 
A two-dimensional integration lattice is a set of $N$ points given as
$$ \left\{  \left( \frac{i}{N}, \frac{i g}{N} \mod 1 \right) \,:\, i=0,\dots,N-1\right\} $$
for some $g\in \{1,\dots,N-1\}$ coprime to $N$.     
A special case of such a rank-1 lattice rule is the so called Fibonacci lattice that only exists for $N$ being a Fibonacci number $F_n$ and is given by the generating vector $(1,g) = (1,F_{n-1})$, where $F_n$ denotes the $n$-th Fibonacci number. It is well known that the Fibonacci lattices yield the optimal rate of convergence in certain spaces of periodic functions.

In the setting of a reproducing kernel Hilbert space with kernel $K$ on a general domain $D$, the worst case error of the
QMC-rule $Q_N$ can be computed as
$$ {\rm wce}(\cH,\cP_N)^2 =  \int_{D} \int_{D} K(\bsx,\bsy) \, \rd \bsx \rd \bsy - \frac{2}{N}\sum_{i=1}^N \int_{D} K(\bsx_i,\bsy) \, \rd y + \frac{1}{N^2} \sum_{i,j=1}^{N} K(\bsx_i,\bsx_j) ,$$
which is the norm of the error functional, see e.g. \cite{DickPillich,NW10}.
For the kernel $K_{2,\gamma}$ we obtain
%\begin{align*}
 %{\rm wce}( H^{1,\gamma}_\text{mix}(\T^2) ,\cP_n)^2 = & \int_{[0,1)^2} \int_{[0,1)^2} K_{2,\gamma}(\bsx, \bsy) \, \rd\bsx \rd\bsy - \frac{2}{N}\sum_{i=1}^N \int_{[0,1]^2} K_{2,\gamma}(\bsx_i, \bsy) \, \rd \bsy \\
			    %& + \frac{1}{N^2} \sum_{i=1}^N \sum_{j=1}^N K_{2,\gamma}(\bsx_i, \bsx_j) \\
		%= & - 1 + \frac{1}{N^2} \sum_{i=1}^N \sum_{j=1}^N K_{2,\gamma}(\bsx_i, \bsx_j).
%\end{align*}
$$ {\rm wce}( H^{1,\gamma}_\text{mix}(\T^2) ,\cP_N)^2 = - 1 + \frac{1}{N^2} \sum_{i=1}^N \sum_{j=1}^N K_{2,\gamma}(\bsx_i, \bsx_j).$$
There is a close connection between the worst case error of integration in \linebreak ${\rm wce}( H^{1,\gamma}_\text{mix}(\T^2) ,\cP_N)$ for the case $\gamma=6$ and periodic $L_2$-discrepancy, which we will describe in the following.

\subsection{Periodic Discrepancy} \label{sec_dis}
The periodic $L_2$-discrepancy is measured with respect to periodic boxes. In dimension $d=1$, periodic intervals $I(x,y)$ for $x,y\in [0,1)$ are given by
$$ I(x,y)=[x,y) \ \mbox{if} \ x\le y \qquad \mbox{and} \qquad I(x,y)=[x,1) \cup [0,y) \ \mbox{if} \ x > y. $$
In dimension $d>1$, the periodic boxes $B(\bsx,\bsy)$ for $\bsx=(x_1,\dots,x_d)$ and  $\bsy=(y_1,\dots,y_d)\in [0,1)^d$ are products of the one-dimensional intervals, i.e.
$$ B(\bsx,\bsy) = I(x_1,y_1) \times \dots \times I(x_d,y_d).$$
The discrepancy of a set $\cP_N=\{ \bsx_1,\dots,\bsx_N \} \subset [0,1)^d$ with respect to such a periodic box $B=B(\bsx,\bsy)$ is the deviation of the relative number of points of $\cP_N$ in $B$
from the volume of $B$
$$  D({\cP}_N, B ) = \frac{\# {\cP}_N \cap B }{N}  -  \, {\rm vol} (B). $$
Finally, the periodic $L_2$-discrepancy of $\cP_N$ is the $L_2$-norm of the discrepancy function taken over all periodic boxes $B=B(\bsx,\bsy)$, i.e.
$$  D_2(\cP_N) = \left( \int_{[0,1)^d} \int_{[0,1)^d}  D({\cal P}_N, B(\bsx,\bsy) )^2 \rd \, \bsy \rd \bsx \right)^{1/2}. $$
It turns out, see \cite[page 43]{NW10} that the periodic $L_2$-discrepancy can be computed as 
\begin{align*}
	D_2 ({\cP}_N)^2 = & -3^{-d} + \frac{1}{N^2} \sum_{\bsx,\bsy \in  {\cal P}_N} \tilde{K}_d(\bsx,\bsy) \\
			= &  3^{-d} {\rm wce}( H^{1,6}_\text{mix}(\T^d) ,\cP_N)^2,
\end{align*}
where $\tilde{K}_d$ is the tensor product of $d$ kernels $\tilde{K}_1(x,y)= |x-y|^2 -|x-y| + \frac12$.
So minimizing the periodic $L_2$-discrepancy is equivalent to minimizing the worst case error in $H^{1,\gamma}_\text{mix}$ for $\gamma=6$.
Let us also remark that the periodic $L_2$-discrepancy is (up to a factor) sometimes also called diaphony. 
This terminology was introduced in \cite{Zin1997}.

\section{Optimal Cubature Points}
In this section we deal with (local) optimality conditions for a set of two-dimensional points $\cP_N \equiv (\bsx, \bsy) \subset \T^2$, where $\bsx, \bsy \in \T^N$ denote the 
vectors of the first and second components of the points, respectively.

\subsection{Optimization Problem}

We want to minimize the squared worst case error %$ {\rm wce}( H^{1,\gamma}_\text{mix}(\T^2) ,\cP_n)^2$, which for $N\geq2$ is given by
\begin{align*}
	 {\rm wce}  & ( H^{1,\gamma}_\text{mix}(\T^2) ,\cP_N)^2 =  -1 + \frac{1}{N^2} \sum_{i,j=0}^{N-1}  K_{1,\gamma}(x_i, x_j) \, K_{1,\gamma}(y_i, y_j) 	\\
% 		= & -1 + \frac{1}{N^2} \sum_{i=0}^{N-1} \sum_{j=0}^{N-1} ( 1+ \gamma k(|x_i - x_j|) ) \, ( 1+ \gamma k(|y_i - y_j|) ) \\
		= & -1 + \frac{1}{N^2} \sum_{i,j=0}^{N-1}  \left(1 + \gamma k(|x_i - x_j|)  + \gamma k(|y_i - y_j|) + \gamma^2 k(|x_i - x_j|)  k(|y_i - y_j|) \right)	\\
		= & \frac{\gamma}{N^2} \sum_{i,j=0}^{N-1} \left(  k(|x_i - x_j|)  + k(|y_i - y_j|) + \gamma k(|x_i - x_j|)  k(|y_i - y_j|) \right) \\
		= & \frac{\gamma ( 2 k(0) + \gamma k(0)^2) }{N}  \\
		  & \quad + \frac{2 \gamma}{N^2} \sum_{i=0}^{N-2} \sum_{j=i+1}^{N-1} \left( k(|x_i - x_j|)  + k(|y_i - y_j|) + \gamma k(|x_i - x_j|)  k(|y_i - y_j|) \right) \\
\end{align*}
Thus, minimizing ${\rm wce}  ( H^{1,\gamma}_\text{mix}(\T^2) ,\cP_N)^2$ is equivalent to minimizing either
\begin{equation}
	 F_\gamma(\bsx, \bsy) := \sum_{i=0}^{N-2} \sum_{j=i+1}^{N-1} \left( k(|x_i - x_j|)   + k(|y_i - y_j|) + \gamma k(|x_i - x_j|)  k(|y_i - y_j|) \right)
\end{equation}
or 
\begin{equation}
	 G_\gamma(\bsx, \bsy) := \sum_{i,j=0}^{N-1} (1+\gamma k(|x_i - x_j|))(1+\gamma k(|y_i - y_j|) ).
\end{equation}
For theoretical considerations we will sometimes use $G_\gamma$, while for the numerical implementation we will use $F_\gamma$ as objective function, since it has less summands.

% Given a permutation $\sigma \in S_{N-1}$ we define the sets
% \[
% 	D_\sigma = \{\bsx \in [0,1)^N, \bsy \in [0,1)^N: 0=x_0 \leq x_1 \leq \cdots \leq x_{N-1} \text{ and } 0 = y_{0} \leq y_{\sigma(1)} \leq \cdots \leq y_{\sigma(N-1)} \}
% \]
Let $\tau,\sigma \in S_N$ be two permutations of $\{0,1,\dots,N-1\}$. Define the sets 
\begin{equation}
	D_{\tau, \sigma} = \left\{\bsx \in [0,1)^N, \bsy \in [0,1)^N:   \begin{matrix}
	    x_{\tau(0)} \leq x_{\tau(1)} \leq \cdots \leq x_{\tau(N-1)} \\ %\text{ and } 
	    y_{\sigma(0)} \leq y_{\sigma(1)} \leq \cdots \leq y_{\sigma(N-1)}  
	    \end{matrix}
	\right\}
\end{equation}
on which all points maintain the same order in both components and hence it holds $|x_i - x_j| = s_{i,j} (x_i-x_j)$ for $s_{i,j} \in \{-1,1\}$. It follows that  the restriction of $F_\gamma$ to $D_{\tau, \sigma}$, i.e. $F_\gamma(\bsx, \bsy)_{|D_{\tau,\sigma}}$, is a polynomial of degree $4$ in $(\bsx, \bsy)$. Moreover, $F_{\gamma{|D_{\tau,\sigma}}}$ is convex for sufficiently small $\gamma$.

\begin{proposition}\label{prop_conv}
	$F_\gamma(\bsx, \bsy)_{|D_{\tau,\sigma}}$ and $G_\gamma(\bsx, \bsy)_{|D_{\tau,\sigma}}$ are convex if $\gamma \in [0, 6]$.
\end{proposition}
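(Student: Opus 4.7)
My approach is to reduce the high-dimensional convexity statement on $D_{\tau,\sigma}$ to the convexity of a simple two-variable function, by exploiting the linear structure of $D_{\tau,\sigma}$, and then to verify the latter via a Hessian calculation that rests on a single Bernoulli-polynomial identity.

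First I would observe that on $D_{\tau,\sigma}$ the sign of $x_i-x_j$ and of $y_i-y_j$ is constant for every pair $i<j$, so that $a_{ij}(\bsx):=|x_i-x_j|$ and $b_{ij}(\bsy):=|y_i-y_j|$ are linear functionals of $\bsx$ and $\bsy$ respectively. Hence each summand of $F_\gamma$ has the form $\phi\circ L_{ij}$, where $L_{ij}(\bsx,\bsy)=(a_{ij},b_{ij})$ is affine from $\R^{2N}$ into $[0,1]^2$ and
\[
\phi(a,b) \;=\; k(a) + k(b) + \gamma\, k(a)\,k(b).
\]
Because convexity is preserved under affine precomposition and under summation, it suffices to prove that $\phi$ is convex on $[0,1]^2$ for every $\gamma\in[0,6]$. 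The corresponding statement for $G_\gamma$ follows from the identity $(1+\gamma k(a))(1+\gamma k(b)) = 1 + \gamma\,\phi(a,b)$, which after summing over $i,j$ and isolating the diagonal terms $i=j$ yields $G_\gamma = c(N,\gamma) + 2\gamma\,F_\gamma$ for some constant $c(N,\gamma)$.

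Computing the Hessian of $\phi$ using $k''\equiv 1$ gives
\[
H_\phi(a,b) \;=\; \begin{pmatrix} 1+\gamma k(b) & \gamma\,k'(a)k'(b) \\[2pt] \gamma\,k'(a)k'(b) & 1+\gamma k(a) \end{pmatrix}.
\]
Since $k\ge -1/24$ on $[0,1]$, both diagonal entries exceed $1-\gamma/24\ge 3/4$ whenever $\gamma\le 6$, so positive semidefiniteness reduces to the determinant bound $\det H_\phi\ge 0$. This inequality is the main obstacle and the only place where the threshold $\gamma\le 6$ enters non-trivially.

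The essential algebraic observation is the identity $k'(t)^2 = 2k(t) + 1/12$, an immediate consequence of $k(t) = \tfrac{1}{2}(t-\tfrac{1}{2})^2 - \tfrac{1}{24}$. Introducing $u := k(a) + 1/24 \in [0,1/8]$, $v := k(b) + 1/24 \in [0,1/8]$ and $A := 1-\gamma/24 > 0$, a short expansion turns the determinant into
\[
\det H_\phi \;=\; A^2 + A\gamma(u+v) - 3\gamma^2 uv.
\]
Bounding $u+v \ge 2\sqrt{uv}$ by AM--GM, this is at least $-3\gamma^2 s^2 + 2A\gamma s + A^2$ in $s := \sqrt{uv}$, a downward parabola whose only positive root is $s = A/\gamma$. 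Thus $\det H_\phi \ge 0$ whenever $\sqrt{uv} \le A/\gamma$; combining $\sqrt{uv} \le 1/8$ with $A/\gamma = 1/\gamma - 1/24$ shows the bound holds, with equality attained exactly at $\gamma = 6$. This completes the PSD check and shows that the threshold $\gamma=6$ is sharp.
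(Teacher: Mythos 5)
Your argument is correct, and its overall architecture is the same as the paper's: restrict to $D_{\tau,\sigma}$ so that $|x_i-x_j|$, $|y_i-y_j|$ become affine, reduce by summation and affine precomposition to the convexity of a single bivariate function built from $k$, and check a $2\times2$ Hessian (your $\phi$ and the paper's $f(s,t)=(1+\gamma k(s))(1+\gamma k(t))$ differ only by the affine relation $f=1+\gamma\phi$, and your identity $G_\gamma=c(N,\gamma)+2\gamma F_\gamma$ is exactly the computation the paper performs in Section~3.1). The genuine difference is in the determinant estimate, and yours is strictly stronger. The paper bounds $\det$ by the factorwise inequality $1+\gamma k(s)>\gamma\bigl(s-\tfrac12\bigr)^2$, which is strict only for $\gamma<6$; at $\gamma=6$ the authors note the determinant can vanish and explicitly omit the additional argument. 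Your identity $k'(t)^2=2k(t)+\tfrac1{12}$ turns the determinant into the exact expression $A^2+A\gamma(u+v)-3\gamma^2uv$ with $A=1-\gamma/24$, and the AM--GM step then gives $\det H_\phi\ge0$ on all of $[0,1]^2$ precisely for $\gamma\le6$; since positive \emph{semi}definiteness of the Hessian on the convex set $[0,1]^2$ already implies convexity, this closes the $\gamma=6$ endpoint that the paper leaves open and shows the threshold is sharp. One small imprecision: equality in $\det H_\phi=0$ at $\gamma=6$ occurs only at the corner values $a,b\in\{0,1\}$ (where $u=v=\tfrac18$), not for all $(a,b)$, but this does not affect the conclusion.
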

	
\begin{proof}
  It is enough to prove the claim for $$G_\gamma(\bsx, \bsy) = \sum_{i,j=0}^{N-1} (1+\gamma k(|x_i - x_j|))(1+\gamma k(|y_i - y_j|) ).$$
	Since the sum of convex functions is convex and since $f(x-y)$ is convex if $f$ is, it is enough to show that
	$ f(s,t) =  \big( 1+\gamma k(s) \big) \big( 1+\gamma k(t) \big) $ is convex for $s,t \in [0,1]$.
  To this end, we show that the Hesse matrix $\cH(f)$  is positive definite if $0 \le \gamma<6$.
	First, $f_{ss} = \gamma  \big( 1+\gamma k(t) \big)$ is positive if $\gamma<24$.
	Hence is is enough to check that the determinant of $\cH(f)$ is positive, which is equivalent to the inequality
	$$ \big( 1+\gamma k(s) \big) \big( 1+\gamma k(t) \big)  > \gamma^2 \left( s-\frac12 \right)^2 \left( t-\frac12 \right)^2.$$
	So it remains to see that $$  1+\gamma k(s) =  1 + \frac{\gamma}{2} \left( s^2-s+\frac16 \right) > \gamma \left( s-\frac12 \right)^2.$$
	But this is elementary to check for $0\le \gamma < 6$ and $s\in [0,1]$.
	In the case $\gamma=6$ the determinant of $\cH(f)=0$ and some additional argument is necessary which we omit here.
\qed\end{proof}
Since
\[
	[0,1)^N \times [0,1)^N = \bigcup_{(\tau, \sigma) \in S_N \times S_N} D_{\tau, \sigma} ,
\]
one can obtain the global minimum of $F_\gamma$ on $[0,1)^N \times [0,1)^N$ by  computing $\argmin_{(\bsx, \bsy) \in D_{\tau,\sigma}} F_\gamma(\bsx, \bsy)$ for all $(\tau, \sigma) \in S_N \times S_N$ and choose the global minimum as the smallest of all the local ones.

% This implies that  $F_\gamma$ is twice continuously differentiable on $D_\sigma$, because with  $s_{i,j} := \begin{cases}
%                                              1 & y_i \geq y_j \\
%                                              -1 & y_i < y_j
%                                             \end{cases}$ it holds
% \begin{equation}
%   F_\gamma(\bsx, \bsy)_{| D_\sigma} = \sum_{i=0}^{N-2} \sum_{j=i+1}^{N-1} k(x_j-x_i)   +  k(s_{i,j} (y_i - y_j)) + \gamma k(x_j-x_i)  k(s_{i,j} (y_i - y_j)),
% \end{equation}

% Unfortunately $S_N \times S_N$ contains $(N!)^2$ elements, hence we will try to reduce the number of local optimization problems that have to be considered.

\subsection{Using the Torus Symmetries}

We now want to analyze how symmetries of the two dimensional torus $\mathbb{T}^2$ allow to reduce the number of regions $D_{\tau,\sigma}$ for which the optimization problem has to be solved.
% To this end, let us first introduce a version of describing the sets $D_\sigma$ with more redundancy which is more suitable to capture the symmetries. Let $\tau,\sigma \in S_N$ be two permutations of the set $\{0,1,\dots,N-1\}$ and let $D_{\tau,\sigma}$ be the set of all $(\bsx,\bsy) \in \mathbb{T}^N \times \mathbb{T}^N$ satisfying the constraints
% $$ 0 \le x_{\tau(0)} \le  x_{\tau(1)} \le \dots  x_{\tau(N-1)} < 1 \qquad \mbox{and}\qquad  0 \le y_{\sigma(0)} \le  y_{\sigma(1)} \le \dots  y_{\sigma(N-1)} < 1. $$

The symmetries of the torus $\mathbb{T}^2$ which do not change the worst case error for the considered classes of periodic functions are generated by
\begin{enumerate}
	\item Shifts in the first coordinate $x \mapsto x + c \mod 1$ and shifts in the second coordinate $y \mapsto y + c \mod 1$.
	\item Reflection of the first coordinate $x \mapsto 1-x$ and reflection of the second coordinate $y \mapsto 1-y$.
	\item Interchanging the first coordinate $x$ and the second coordinate $y$.
	\item The points are indistinguishable, hence relabeling the points does not change the worst case error.
\end{enumerate}
Applying finite compositions of these symmetries to all the points in the point set $\cP_N = \{ (x_0,y_0), \dots , (x_{N-1},y_{N-1}) \}$ leads to an equivalent point set with the same
worst case integration error.
This shows that the group of symmetries $G$ acting on the pairs $(\tau,\sigma)$ indexing $D_{\tau,\sigma}$ generated by the following operations
\begin{enumerate}
	\item replacing $\tau$ or $\sigma$ by a shifted permutation: $\tau \mapsto ( \tau(0) + k \mod N , \dots, \tau(N-1) + k \mod N )$ or $\sigma \mapsto ( \sigma(0) + k \mod N , \dots, \sigma(N-1) + k \mod N )$
	\item replacing $\tau$ or $\sigma$ by its flipped permutation:  $\tau \mapsto ( \tau(N-1), \tau(N-2), \dots, \tau(1), \tau(0) )$ or $\sigma \mapsto ( \sigma(N-1), \sigma(N-2), \dots, \sigma(1), \sigma(0) )$
	\item interchanging $\sigma$ and $\tau$: $(\tau,\sigma) \mapsto ( \sigma,\tau)$
	\item applying a permutation $\pi\in S_N$ to both $\tau$ and $\sigma$ : $(\tau,\sigma) \mapsto (\pi \tau, \pi \sigma)$
\end{enumerate}
lead to equivalent optimization problems. So let us call the pairs $(\tau,\sigma)$ and $(\tau',\sigma')$ in $S_N \times S_N$ equivalent if they are in the same orbit with respect to the action of $G$.
In this case we  write $(\tau,\sigma) \sim (\tau',\sigma')$.

Using the torus symmetries 1. and 4. it can always be arranged that $\tau = \text{id}$ and $\sigma(0)=0$, which together with fixing the point $(x_0,y_0)=(0,0)$ leads to the
sets 
\begin{equation}
	D_{\sigma} = \left\{\bsx \in [0,1)^N, \bsy \in [0,1)^N:   \begin{array}{rl}
	    0= & x_0 \leq x_1 \leq \ldots \leq x_{N-1} \\ 
	    0= & y_{0} \leq y_{\sigma(1)} \leq \cdots \leq y_{\sigma(N-1)}
	    \end{array}
	\right\},
\end{equation}
where $\sigma \in S_{N-1}$ denotes a permutation of $\{1,2,\ldots, N-1\}$.

But there are many more symmetries and it would be algorithmically desirable to cycle through exactly one representative of each equivalence class
without ever touching the other equivalent $\sigma$. This seems to be difficult to implement, hence we settled for a little less which still reduces the amount of permutations to be handled considerably.

To this end, let us define the symmetrized metric 
\begin{equation}
 d(i,j) = \min \{ |i-j| , N - |i-j| \} \qquad \mbox{for}\qquad 0 \le i,j \le N-1
\end{equation}
and the following subset of $S_{N}$.
\begin{definition}
	The set of \emph{semi-canonical} permutations $\mathfrak{C}_N \subset S_N$ consists of permutations $\sigma$ which fulfill
	\begin{enumerate}
		\item[(i)] $\sigma(0)=0$
		\item[(ii)] $d(\sigma(1),\sigma(2)) \le d(0,\sigma(N-1))$
		\item[(iii)] $ \sigma(1) = \min \left\{ d(\sigma(i),\sigma(i+1)) \mid i=0,1,\dots,N-1 \right\}$
		\item[(iv)] $\sigma$ is lexicographically smaller than $\sigma^{-1}$.
	\end{enumerate}
	Here we identify $\sigma(N)$ with $0=\sigma(0)$.
\end{definition}
%
%
% Now we call a permutation $\sigma$ {\em semi-canonical} if $\sigma(0)=0$, $d(\sigma(1),\sigma(2)) \le d(0,\sigma(N-1))$ and 
% $$ \sigma(1) = \min \left\{ d(\sigma(i),\sigma(i+1)) \mid i=0,1,\dots,N-1 \right\}.$$
This means that $\sigma$ is {\em semi-canonical} if the distance between $0=\sigma(0)$ and $\sigma(1)$ is minimal among all distances between $\sigma(i)$ and $\sigma(i+1)$,
which can be arranged by a shift. Moreover, the distance between $\sigma(1)$ and $\sigma(2)$ is at most as large as the distance between $\sigma(0)$ and $\sigma(N-1)$,
which can be arranged by a reflection and a shift if it is not the case.
Hence we have obtained the following lemma. 
\begin{lemma}
  For any permutation $\sigma \in S_N$ with $\sigma(0)=0$ there exists a semi-canonical $\sigma'$ such that the sets $D_\sigma$ and $D_{\sigma'}$ are equivalent up to torus symmetry.
\end{lemma}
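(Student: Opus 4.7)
The plan is to construct a semi-canonical $\sigma' \in \mathfrak{C}_N$ in the orbit of $\sigma$ under $G$ by enforcing the four conditions (i)--(iv) in order, each through a composition of the four generators of $G$.

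Condition (i) is the hypothesis. For condition (iii), I view $(\sigma(0)=0, \sigma(1), \ldots, \sigma(N-1))$ as a cyclic sequence in $\Z_N$, let $m := \min_i d(\sigma(i), \sigma((i+1) \bmod N))$, and fix an index $i^*$ attaining this minimum. A suitable composition of shifted-permutations and relabelings (corresponding to a torus shift that moves the current point with label $\sigma(i^*)$ to the origin) replaces $\sigma$ by $\sigma_1(i) := (\sigma((i + i^*) \bmod N) - \sigma(i^*)) \bmod N$, which satisfies $\sigma_1(0) = 0$ and $d(\sigma_1(0), \sigma_1(1)) = m$. Hence $\sigma_1(1) \in \{m, N - m\}$; in the latter case, a further composition realizing the value-reflection $\sigma \mapsto (N - \sigma) \bmod N$ (obtainable from a flipped-permutation together with a shifted-permutation) fixes $\sigma_1(0) = 0$ and enforces $\sigma_1(1) = m$, establishing (iii).

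For condition (ii), set $a := d(\sigma_1(1), \sigma_1(2))$ and $b := d(0, \sigma_1(N-1))$. If $a \le b$, no further action is needed. Otherwise, apply the composite: a $y$-reflection $\sigma_1(i) \mapsto \sigma_1((N - i) \bmod N)$, which fixes $\sigma_1(0) = 0$ but temporarily displaces the minimum gap to the cyclic position $(N-1, 0)$, followed by a reference-shift that returns the minimum to position $(0, 1)$, followed by another value-reflection to restore $\sigma_1(1) = m$. A direct calculation tracking the entries at positions $1$, $2$, and $N-1$ shows that this composite keeps $\sigma(0) = 0$ and preserves the multiset of cyclic gap distances (so (iii) persists), while interchanging $a$ and $b$; thus (ii) holds for the resulting $\sigma_2$.

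Finally, for condition (iv), if $\sigma_2 \le \sigma_2^{-1}$ lexicographically, set $\sigma' := \sigma_2$; otherwise the coordinate-swap generator replaces $\sigma_2$ by $\sigma_2^{-1}$, and the previous two steps are reapplied to restore (i)--(iii). Since $G$ acts with finite orbits on $S_N$ and (i)--(iii) can always be re-established, iterating yields, after finitely many rounds, a permutation in $\mathfrak{C}_N$. The main obstacle is the calculation in Step 2: one must verify that the three-factor composite of $y$-reflection, reference-shift, and value-reflection keeps $\sigma(0) = 0$, returns $m$ to the first position, and exchanges $a$ and $b$. The verification reduces to routine bookkeeping once one observes that each factor preserves the multiset of cyclic gap distances and that the reference-shift cyclically rotates their sequence.
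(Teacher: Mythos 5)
Your construction follows the same route as the paper's own (very brief) justification: a torus shift to move the minimal cyclic gap to the position between $\sigma(0)$ and $\sigma(1)$, giving (iii), and a reflection followed by a re-centering shift to exchange $d(\sigma(1),\sigma(2))$ with $d(0,\sigma(N-1))$, giving (ii). You are in fact more careful than the paper on the two points it glosses over: you supply the extra value reflection needed to turn $\sigma(1)=N-m$ into $\sigma(1)=m$, and you verify that the composite used to enforce (ii) preserves the multiset of cyclic gaps and so does not destroy (iii). One small imprecision: the value reflection $\sigma\mapsto (N-\sigma)\bmod N$ is not a flip of $\sigma$ composed with a value shift; it arises from reflecting the \emph{first} coordinate, i.e.\ a flip of $\tau$, followed by the relabelling that restores $\tau=\mathrm{id}$ (plus a shift). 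The operation is in the group, but your factorization of it is off.

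The one genuine gap is condition (iv), which the paper's own argument silently omits. Your termination claim --- ``$G$ acts with finite orbits and (i)--(iii) can always be re-established, so iterate'' --- does not work as stated: the map ``invert, then re-normalize to satisfy (i)--(iii)'' is a self-map of a finite set and therefore eventually cycles, but nothing rules out a cycle passing only through permutations that violate (iv), because re-normalizing after inversion is not monotone for the lexicographic order. Picking the lexicographically smallest element of the set $O'$ of orbit members satisfying (i)--(iii) does not obviously repair this either, since the inverse of that element need not lie in $O'$, so no contradiction with minimality is available. To close the step you need an actual potential or monotonicity argument, or a structural reason why some element of $O'$ must satisfy $\sigma\le_{\mathrm{lex}}\sigma^{-1}$; as written, this part is asserted rather than proved (though the paper itself offers nothing at all for (iv), so your attempt is still a strict improvement on the published argument).
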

Thus we need to consider only semi-canonical $\sigma$ which is easy to do algorithmically. % without ever touching permutations which are not semi-canonical.

\begin{remark}
	If $\sigma \in S_{N}$ is semi-canonical, it holds $\sigma(1) \leq N/2$.
\end{remark}
Another main advantage in considering our objective function only in domains $D_\sigma$ is that it is not only convex but strictly convex here. This is due
to the fact that we fix $(x_0,y_0)=(0,0)$. 

\begin{proposition} \label{prop_convexity}
	$F_\gamma(\bsx, \bsy)_{|D_{\sigma}}$ and $G_\gamma(\bsx, \bsy)_{|D_{\sigma}}$ are strictly convex if $\gamma \in [0, 6]$.
\end{proposition}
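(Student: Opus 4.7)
The plan is to build on the proof of Proposition \ref{prop_conv}, which established that the $2\times 2$ Hessian $\cH(f)$ of $f(s,t)=(1+\gamma k(s))(1+\gamma k(t))$ is positive semidefinite on $[0,1]^2$ for $\gamma\in[0,6]$, and to exploit the additional constraint $x_0=y_0=0$ built into $D_\sigma$ to eliminate the two-dimensional null direction corresponding to simultaneous translations of all points in the $x$- and $y$-coordinates. A direct expansion shows $G_\gamma = 2\gamma F_\gamma + c$ with $c$ independent of $(\bsx,\bsy)$, so it suffices to prove strict convexity of $G_\gamma$.

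First I would decompose the Hessian $H$ of $G_{\gamma|D_\sigma}$ in the $2(N-1)$ free variables as $H=\sum_{0\le i<j\le N-1} H_{ij}$. For a perturbation with components $u_1,\dots,u_{N-1},w_1,\dots,w_{N-1}$, extended by $u_0=w_0=0$, each pairwise contribution equals
\[
v^{T} H_{ij} v = f_{ss}(u_i-u_j)^2 + 2\, s_{ij}s'_{ij}\, f_{st}\,(u_i-u_j)(w_i-w_j) + f_{tt}(w_i-w_j)^2,
\]
with the partial derivatives of $f$ evaluated at $(|x_i-x_j|,|y_i-y_j|)$. Because the signs $s_{ij},s'_{ij}\in\{\pm 1\}$ enter only through the cross term, this quadratic form shares its eigenvalues with $\cH(f)$ and is positive definite whenever $\cH(f)$ is.

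Next I would close the $\gamma=6$ case that was deferred in Proposition \ref{prop_conv}. For $\gamma<6$ the strict inequality $1+\gamma k(s) > \gamma(s-\tfrac12)^2$ on $[0,1]$ already yields $\det\cH(f)>0$ on all of $[0,1]^2$. For $\gamma=6$ an explicit computation of $\det\cH(f)(s,t)$ as a polynomial in $(s,t)$ should identify its zero set on $[0,1]^2$ as exactly the four corners; since $(|x_i-x_j|,|y_i-y_j|)\in[0,1)^2$, only the corner $(0,0)$ is attainable, and it corresponds precisely to coincident points $(x_i,y_i)=(x_j,y_j)$. Hence on any configuration of pairwise distinct points, every $H_{ij}$ is positive definite.

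The conclusion then follows: $v^{T}Hv=0$ forces every $v^{T}H_{ij}v=0$, and positive definiteness of $\cH(f)$ at each reached argument requires $u_i=u_j$ and $w_i=w_j$ for all pairs, so $u$ and $w$ are constant vectors; the constraint $u_0=w_0=0$ then gives $v=0$. The main obstacle I anticipate is the explicit verification in the borderline case $\gamma=6$: one has to put $\det\cH(f)(s,t)$ in a form (for instance as a sum of squares in the variables $(s-\tfrac12)^2$ and $(t-\tfrac12)^2$) that makes both its non-negativity on $[0,1]^2$ and the exact location of its zero set transparent.
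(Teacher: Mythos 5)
Your proof is correct in substance but takes a genuinely different route from the paper's. The paper never analyses the full Hessian: it splits $G_\gamma$ into the $N-1$ summands that involve the anchored point $(x_0,y_0)=(0,0)$, namely $\sum_{i=1}^{N-1}(1+\gamma k(x_i))(1+\gamma k(y_i))$, plus a remainder which is merely convex by Proposition \ref{prop_conv}; since the $i$-th summand is strictly convex in its own block of variables $(x_i,y_i)$ and these blocks are disjoint, the anchored part is strictly convex in all $2(N-1)$ free variables, and adding a convex remainder preserves this. That argument is shorter and needs positive definiteness of $\cH(f)$ only for the pairs containing the fixed point, where the arguments are $(x_i,y_i)$ themselves. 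You instead decompose the Hessian over all $\binom{N}{2}$ pairs and chase its kernel; this is more work, but it buys an honest treatment of the borderline case $\gamma=6$, which the paper explicitly omits in Proposition \ref{prop_conv} and then silently relies on here. Your claim about the degenerate locus is right: writing $1+6k(s)=6(s-\tfrac12)^2+3s(1-s)$ gives
\begin{equation*}
\det\cH(f)/36 \;=\; 18\,(s-\tfrac12)^2\,t(1-t)\;+\;18\,s(1-s)\,(t-\tfrac12)^2\;+\;9\,s(1-s)\,t(1-t),
\end{equation*}
which vanishes on $[0,1]^2$ exactly at the four corners, and only $(0,0)$ (coincident points) is reachable.

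Two loose ends remain. First, $D_\sigma$ is closed and contains configurations with coincident points, so for $\gamma=6$ you only get positive definiteness of $H$ on the dense set of configurations with pairwise distinct points; semidefiniteness everywhere plus definiteness on a dense set does not by itself yield strict convexity, because a segment could lie entirely inside the degenerate locus. The fix is one line: if $v^\top H v=0$ along a whole subsegment, then for each pair $(i,j)$ either the two points are distinct somewhere on it, forcing $u_i=u_j$ and $w_i=w_j$ by your argument, or they coincide on the whole subsegment, forcing $u_i=u_j$ and $w_i=w_j$ because $x_i(t)-x_j(t)$ and $y_i(t)-y_j(t)$ are affine in $t$ and vanish identically; either way $v$ is a constant shift, hence $v=0$ by $u_0=w_0=0$. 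Second, your reduction $G_\gamma=2\gamma F_\gamma+c$ degenerates at $\gamma=0$, where $G_0$ is constant and not strictly convex (the statement is in fact false for $G_0$, a defect the paper shares); strict convexity of $F_0$ must be checked directly, which is immediate since its Hessian is the complete-graph Laplacian restricted to $u_0=w_0=0$.
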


\begin{proof}
  Again it is enough to prove the claim for $$G_\gamma(\bsx, \bsy) = \sum_{i,j=0}^{N-1} (1+\gamma k(|x_i - x_j|))(1+\gamma k(|y_i - y_j|) ).$$
	Now we use that the sum of a convex and a strictly convex function is again strictly convex.
	Hence it is enough to show that the function
	\begin{align*}
	  f(x_1,\dots,x_{N-1},y_1,\dots,y_{N-1} ) & = \sum_{i=1}^{N-1} (1+\gamma k(|x_i - x_0|))(1+\gamma k(|y_i - y_0|) ) \\
		                                        & = \sum_{i=1}^{N-1} (1+\gamma k(x_i))(1+\gamma k(y_i) ) 
	\end{align*}
	is strictly convex on $[0,1]^{N-1} \times [0,1]^{N-1}$.
	In the proof of Proposition \ref{prop_conv} it was actually shown that $f_i(x_i,y_i)=(1+\gamma k(x_i))(1+\gamma k(y_i) )$ is strictly convex for
	$(x_i,y_i) \in [0,1]^2$ for each fixed $i=1,\dots,N-1$. 
	Hence the strict convexity of $f$ follows from the following easily verified lemma. \qed
	\begin{lemma}
	  Let $f_i:D_i \to \R, i=1,\dots,m$ be strictly convex functions on the convex domains $D_i \in \R^{d_i}$.
		Then the function $$ f: D= D_1 \times \dots\times D_m  \to \R, (z_1,\dots,z_m) \mapsto \sum_{i=1}^m f_i(z_i)$$ is strictly convex. 
	\end{lemma}
\end{proof}
Hence we have indeed a unique point in each $D_\sigma$ where the minimum of $F_\gamma$ is attained.

\subsection{Minimizing $F_\gamma$ on $D_\sigma$}

Our strategy will be to compute the local minimum of $F_\gamma$ on each region \linebreak $D_\sigma \subset [0,1)^N \times [0,1)^N$ for all semi-canonical permutations $\sigma \in \mathfrak{C}_N \subset S_{N}$ and determine the  global minimum by choosing the smallest of all the local ones. 

This gives for each $\sigma \in \mathfrak{C}_N$ the constrained optimization problem
\begin{equation} \label{eqn_constr_mini}
 \min_{(\bsx, \bsy)  \in D_\sigma} F_\gamma(\bsx, \bsy) \quad \text{ subject to } v_i(\bsx) \geq 0 \text { and } w_i(\bsy) \geq 0 \text{ for all } i=1,\ldots, N-1 ,
\end{equation}
where the inequality constraints are linear and given by
\begin{equation}
	v_i(\bsx) = x_{i} - x_{i-1} \quad \text{ and } \quad w_i(\bsy) = y_{\sigma(i)} - y_{\sigma(i-1)} \quad \text{ for } i=1,\ldots, N-1 .
\end{equation}
%
% We remark that we do not optimize with respect to the point $(x_0, y_0)$, because it is fixed at $(0,0)$.
%
In order to use the necessary (and due to local strict convexity also sufficient) conditions for local minima
\begin{align*}
	\frac{\partial}{\partial x_k} F_\gamma(\bsx, \bsy) =  0 \quad \text{ and } \quad \frac{\partial}{\partial y_k} F_\gamma(\bsx, \bsy) =  0 \quad \text{ for } k=1,\ldots, N-1
\end{align*}
for $(\bsx, \bsy) \in D_\sigma$  we need to evaluate the partial derivatives of $F_\gamma$. 

\begin{proposition}
 For a given permutation $\sigma \in \mathfrak{C}_N$ the partial derivative of $F_{\gamma|D_\sigma}$ with respect to the second component $\bsy$ is given by 
 \begin{equation} \label{eqn_first_deriv}
  \frac{\partial}{\partial y_k} F_\gamma(\bsx, \bsy)_{|D_\sigma} = y_k \left( \sum_{\substack{ i=0 \\ i \neq k} }^{N-1}  c_{i,k}  \right)   - \sum_{\substack{ i=0 \\ i \neq k} }^{N-1} c_{i,k}  y_i + \frac{1}{2} \left( \sum_{i=0}^{k-1} c_{i,k} s_{i,k} - \sum_{j=k+1}^{N-1} c_{k,j} s_{k,j} \right),
 \end{equation}
where $s_{i,j} = \text{sgn}(y_i - y_j)$ and $c_{i,j} := 1+ \gamma  k(|x_i - x_j|) = c_{j,i}$.

Interchanging $\bsx$ and $\bsy$ the same result holds for the partial derivatives with respect to $\bsx$ with the obvious modification to $c_{i,j}$ and the simplification that $s_{i,j} = -1$.

The second order derivatives with respect to $\bsy$  are given by
\begin{equation} \label{eqn_second_deriv}
	\frac{\partial^2}{\partial y_k \partial y_j} F(\bsx, \bsy)_{|D_\sigma} = 	\begin{cases}
													\sum_{i=0 }^{k-1}  c_{i,k} + \sum_{i=k+1 }^{N-1}  c_{i,k} & \text{ for } j=k \\
													-c_{k,j}	& \text{ for } j \neq k
												\end{cases}, \quad k,j \in \{1,\ldots, N-1\}
\end{equation}
Again, the analogue for $\frac{\partial^2}{\partial x_k \partial x_j} F(\bsx, \bsy)_{|D_\sigma}$ is obtained with the obvious modification $c_{i,j} = 1+\gamma k(|y_i-y_j|)$.
\end{proposition}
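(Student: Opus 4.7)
The plan is to reduce the computation to a direct application of the chain rule by exploiting the fact that on $D_\sigma$ the signs $s_{i,j}=\mathrm{sgn}(y_i-y_j)$ are constant. First I would rewrite the objective function grouping the summands that depend on $\bsy$:
\[
F_\gamma(\bsx,\bsy)=\sum_{i<j} k(|x_i-x_j|)+\sum_{i<j} c_{i,j}\,k(|y_i-y_j|),
\]
where $c_{i,j}=1+\gamma k(|x_i-x_j|)=c_{j,i}$, so that the only terms involving $y_k$ are those with $i=k$ or $j=k$.

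Next I would use that on the interior of $D_\sigma$ the identity $|y_i-y_j|=s_{i,j}(y_i-y_j)$ holds with constant $s_{i,j}$, hence $|y_i-y_j|$ is smooth in $y_k$ with $\tfrac{d}{dy_k}|y_k-y_j|=s_{k,j}$ for $j>k$ and $\tfrac{d}{dy_k}|y_i-y_k|=-s_{i,k}$ for $i<k$. Combined with $k'(t)=t-\tfrac12$ and the elementary identity $s_{i,j}|y_i-y_j|=y_i-y_j$, the chain rule gives
\[
\tfrac{\partial}{\partial y_k}\bigl[c_{i,k} k(|y_i-y_k|)\bigr]=c_{i,k}\bigl[(y_k-y_i)+\tfrac12 s_{i,k}\bigr]\quad\text{for }i<k,
\]
\[
\tfrac{\partial}{\partial y_k}\bigl[c_{k,j} k(|y_k-y_j|)\bigr]=c_{k,j}\bigl[(y_k-y_j)-\tfrac12 s_{k,j}\bigr]\quad\text{for }j>k.
\]
Summing these two contributions over $i=0,\dots,k-1$ and $j=k+1,\dots,N-1$ and collecting the coefficients of $y_k$, $y_i$ and the sign terms separately gives exactly \eqref{eqn_first_deriv}. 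The case of $\partial/\partial x_k$ is identical after relabeling, and because $D_\sigma$ forces $x_0\le x_1\le\cdots\le x_{N-1}$ all the analogous signs reduce to $-1$ for $i<j$.

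For the Hessian I would simply differentiate \eqref{eqn_first_deriv} once more, noting that on $D_\sigma$ the quantities $c_{i,k}$ and $s_{i,k}$ do not depend on $\bsy$. When $j=k$ only the explicit factor of $y_k$ contributes and yields the diagonal entry $\sum_{i\ne k}c_{i,k}$, which is then split into the two sums in \eqref{eqn_second_deriv}; when $j\ne k$, only the single term $-c_{j,k} y_j$ in the sum $-\sum_{i\ne k}c_{i,k}y_i$ depends on $y_j$, producing $-c_{k,j}$ after using $c_{j,k}=c_{k,j}$. The $\bsx$-analogue again follows by symmetry.

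The only real obstacle is careful sign bookkeeping, in particular turning the antisymmetric quantity $s_{k,i}=-s_{i,k}$ into the asymmetric form in \eqref{eqn_first_deriv}: the two halves $\sum_{i<k}c_{i,k}s_{i,k}$ and $-\sum_{j>k}c_{k,j}s_{k,j}$ arise naturally because the chain rule produces $+\tfrac12 s_{i,k}$ from the pairs where $k$ is the larger index and $-\tfrac12 s_{k,j}$ from the pairs where $k$ is the smaller index, so no further algebraic manipulation is needed.
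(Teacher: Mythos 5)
Your proposal is correct and follows essentially the same route as the paper: both differentiate the sum $\sum_{i<j} c_{i,j}\,k(|y_i-y_j|)$ term by term using the constancy of the signs $s_{i,j}$ on $D_\sigma$ and $k'(t)=t-\tfrac12$, then collect coefficients of $y_k$, $y_i$, and the sign terms, and obtain the Hessian by one further differentiation. The sign bookkeeping in your two displayed derivative formulas matches the paper's intermediate step exactly.
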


\begin{proof}
We prove the claim for the partial derivative with respect to $\bsy$:
\begin{align*}
	\frac{\partial}{\partial y_k} F_\gamma(\bsx, \bsy) = &  \sum_{i=0}^{N-2} \sum_{j=i+1}^{N-1}  \frac{\partial}{\partial y_k} k(|y_i - y_j|)   \underbrace{ \left( 1+ \gamma  k(|x_i - x_j|) \right) }_{ =: c_{i,j} } + \frac{\partial}{\partial y_k} k(|x_i - x_j|)  \\ 
	= &  \sum_{i=0}^{N-2} \sum_{j=i+1}^{N-1}    c_{i,j} \,   \frac{\partial}{\partial y_k} k(|y_i - y_j|)   \\ 
% 	= & \sum_{i=0}^{N-2} \sum_{j=i+1}^{N-1}    c_{i,j} \,  \frac{\partial}{\partial y_k} k( s_{i,j} (y_i - y_j ))   \quad  \quad s_{i,j} \in \{-1,1\} \\
	= & \sum_{i=0}^{N-2} \sum_{j=i+1}^{N-1}    c_{i,j}\;   k'( s_{i,j} \,  (y_i - y_j )) \cdot \begin{cases}
		 														s_{i,j} & \text{ for } i = k \\
																-s_{i,j} & \text{ for } j = k \\
																0 & \text{ else  }
															     \end{cases} \\ 
% 	= & \sum_{j=k+1}^{N-1} c_{k,j}   s_{k,j} \;   k'( s_{k,j} \,  (y_k - y_j ) )  -  \sum_{i=0}^{k-1} c_{i,k}   s_{i,k} \;   k'( s_{i,k} \,  (y_i - y_k ) )	\\
	= & \sum_{j=k+1}^{N-1} c_{k,j}   s_{k,j} \;   \left( s_{k,j} \,  (y_k - y_j ) - \frac{1}{2} \right)  -  \sum_{i=0}^{k-1} c_{i,k}   s_{i,k} \;  \left( s_{i,k} \,  (y_i - y_k ) - \frac{1}{2} \right) \\
% 	= & y_k \left( \sum_{j=k+1}^{N-1} c_{k,j} + \sum_{i=0}^{k-1}  c_{i,k}  \right)   -  \sum_{j=k+1}^{N-1} c_{k,j}   y_j  -  \sum_{i=0}^{k-1} c_{i,k}  y_i \\
% 	    & + \frac{1}{2} \left( \sum_{i=0}^{k-1} c_{i,k} s_{i,k} - \sum_{j=k+1}^{N-1} c_{k,j} s_{k,j} \right)\\
	= & y_k \left( \sum_{\substack{ i=0 \\ i \neq k}}^{N-1}  c_{i,k}  \right)   -  \sum_{\substack{ i=0 \\ i \neq k}}^{N-1} c_{i,k}  y_i + \frac{1}{2} \left( \sum_{i=0}^{k-1} c_{i,k} s_{i,k} - \sum_{j=k+1}^{N-1} c_{k,j} s_{k,j} \right) .
\end{align*}
From this we immediately get the second derivative \eqref{eqn_second_deriv}. \qed
\end{proof}

% \begin{remark}
% 	The partial Hessian \eqref{eqn_second_deriv}, i.e. $\bsM = (\frac{\partial^2}{\partial y_k \partial y_j} F(\bsx, \bsy))_{k,j=1}^{N-1} \in \R^{(N-1) \times (N-1)}$ is strictly positiv definite. This can be seen by using  $c_{i,j} > 0$ for $\gamma < 24$ and apply Gerschgorin circles.
% \end{remark}
% \begin{corollary}
% 	If one coordinate direction is fixed, $F_\gamma(\cdot, \bsy)$ is a strictly convex function on $D_\sigma$.
% \end{corollary}

\subsection{Lower Bounds of $F_\gamma$ on $D_\sigma$}
Until now we are capable of approximating local minima of $F_\gamma$ on a given $D_\sigma$. If this is done for all $\sigma \in \mathfrak{C}_N$ we can obtain a candidate for a global minimum, but  due to the finite precision of floating point arithmetic one can never be sure to be close to the actual global minimum.
However, it is also possible to compute a lower bound for the optimal point set for each $D_\sigma$ using Wolfe-duality for constrained optimization.
It is known \cite{NW06} that for a convex problem with linear inequality constraints like \eqref{eqn_constr_mini}  the Lagrangian
\begin{align}
	\cL_F(\bsx, \bsy, \bslambda, \bsmu) := & F(\bsx, \bsy) - \bslambda^T \bsv(\bsx) - \bsmu^T \bsw(\bsy) \\
	= & F(\bsx, \bsy) - \sum_{i=1}^{N-1} \left( \lambda_i v_i(\bsx) + \mu_i w_{i}(\bsy) \right)
\end{align}
gives a lower bound on $F$, i.e. 
$$\min_{(\bsx, \bsy) \in D_\sigma} F(\bsx, \bsy) \geq \cL_F(\tilde{\bsx}, \tilde{\bsy	}, \bslambda, \bsmu) $$
    for all $(\tilde{\bsx}, \tilde{\bsy}, \bslambda, \bsmu)$ that fulfill the constraint 
	\begin{equation}
		\nabla_{(\bsx, \bsy)}  \cL_F(\tilde{\bsx}, \tilde{\bsy}, \bslambda, \bsmu) = 0 \quad \text{ and } \quad \bslambda, \bsmu \geq 0 \text{ (component-wise)}.
	\end{equation}
Here, $\nabla_{(\bsx, \bsy)} = (\nabla_{\bsx}, \nabla_{\bsy})$, where $\nabla_{\bsx}$ denotes the gradient of a function with respect to the variables in $\bsx$.
Hence it is our goal to find for each $D_\sigma$ such an admissible point $(\tilde{\bsx}, \tilde{\bsy}, \bslambda, \bsmu)$ which yields a lower bound that is larger than 
some given candidate for the global minimum. If the relevant computations are carried out in infinite precision rational number arithmetic these bounds are mathematically reliable.

In order to accomplish this we first have to compute the Lagrangian of \eqref{eqn_constr_mini}. To this end, let  ${\bf P}_\sigma \in \{-1,0,1\}^{(N-1) \times (N-1)}$ denote the permutation matrix corresponding to $\sigma \in S_{N-1}$ and 
\begin{equation} \label{eqnB} 
	\bsB := \begin{pmatrix}
	        	1 & -1 & 0 & \ldots & 0 & 0 \\
	        	0 & 1 & -1 & \ldots & 0 & 0 \\
	        	\vdots & & & \ddots & & \vdots \\
	        	0 & & \ldots & 0 & 1 & -1  \\
	        	0 & & \ldots &  &0  & 1  \\
	        \end{pmatrix} \in \R^{(N-1) \times (N-1)} .
\end{equation}
Then the partial derivatives of $\cL_F$ with respect to $\bsx$ and $\bsy$ are given by
\begin{align}
	\nabla_{\bsx} \cL_F(\bsx, \bsy, \bslambda, \bsmu) = & \nabla_{\bsx} F(\bsx, \bsy) - \begin{pmatrix}
% 	                                                                                    	-\lambda_1 \\
	                                                                                    	\lambda_1 - \lambda_2 \\
	                                                                                    	\vdots \\
	                                                                                    	\lambda_{N-2} - \lambda_{N-1} \\
	                                                                                    	\lambda_{N-1}
	                                                                                    \end{pmatrix}
	                                                                                    = \nabla_{\bsx} F(\bsx, \bsy) - \bsB \bslambda
\end{align}
and
\begin{align}
	\nabla_{\bsy} \cL_F(\bsx, \bsy, \bslambda, \bsmu) = & \nabla_{\bsy} F(\bsx, \bsy) - \begin{pmatrix}
% 	                                                                                    	-\mu_{\sigma(1)} \\
	                                                                                    	\mu_{\sigma(1)} - \mu_{\sigma(2)} \\
	                                                                                    	\vdots \\
	                                                                                    	\mu_{\sigma(N-2)} - \mu_{\sigma(N-1)} \\
	                                                                                    	\mu_{\sigma(N-1)}
	                                                                                    \end{pmatrix}
	                                                                                    = \nabla_{\bsy} F(\bsx, \bsy) -  \bsB {\bf P}_\sigma \bsmu .
\end{align}
This leads to the following theorem.
\begin{theorem} \label{thm_lower_bound}
	For $\sigma \in \mathfrak{C}_N$ and $\delta > 0$ let the point $(\tilde{\bsx}_\sigma, \tilde{\bsy}_\sigma) \in D_\sigma$ fulfill
	\begin{equation} \label{eqn_offset_grad}
		\frac{\partial}{\partial x_k} F(\tilde{\bsx}_\sigma, \tilde{\bsy}_\sigma) = \delta \quad \text{ and } \quad\frac{\partial}{\partial y_k} F(\tilde{\bsx}_\sigma, \tilde{\bsy}_\sigma) = \delta \quad \text{ for } k=1,\ldots, N-1 .
	\end{equation}
	Then 
	\begin{align} \label{eqn_LowerBound}
		F(\bsx, \bsy) \geq & F(\tilde{\bsx}_\sigma, \tilde{\bsy}_\sigma) - \delta \sum_{i=1}^{N-1} \left( (N-i) \cdot v_i(\tilde{\bsx}_\sigma) + \sigma(N-i) w_i(\tilde{\bsy}_\sigma) \right)  \\
			> & F(\tilde{\bsx}_\sigma, \tilde{\bsy}_\sigma) - \delta N^2 \label{eqn_LowerBound2}
	\end{align}
	holds for all $(\bsx, \bsy) \in D_\sigma$.
\end{theorem}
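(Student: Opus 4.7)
The plan is to apply Wolfe duality to the constrained convex program~\eqref{eqn_constr_mini}. By Proposition~\ref{prop_convexity}, $F_\gamma$ restricted to $D_\sigma$ is convex and the inequality constraints are linear, so for any dual-feasible pair $(\bslambda,\bsmu)$---that is, $\bslambda,\bsmu\geq 0$ together with $\nabla_{(\bsx,\bsy)}\cL_F(\tilde\bsx_\sigma,\tilde\bsy_\sigma,\bslambda,\bsmu)=0$---Wolfe duality yields $F(\bsx,\bsy)\geq \cL_F(\tilde\bsx_\sigma,\tilde\bsy_\sigma,\bslambda,\bsmu)$ for every $(\bsx,\bsy)\in D_\sigma$. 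The strategy is therefore to construct such $(\bslambda,\bsmu)$ explicitly from the offset $\delta$ and then read off the resulting lower bound.

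First I would solve the stationarity equations at $(\tilde\bsx_\sigma,\tilde\bsy_\sigma)$. By hypothesis~\eqref{eqn_offset_grad}, both $\nabla_\bsx F$ and $\nabla_\bsy F$ equal $\delta\bsone$ there, so by the formulas for $\nabla\cL_F$ derived just before the theorem, stationarity reduces to the triangular systems $\bsB\bslambda=\delta\bsone$ and $\bsB\mathbf{P}_\sigma\bsmu=\delta\bsone$. Since $\bsB$ is upper bidiagonal with unit diagonal, back-substitution gives the unique solutions $\lambda_i=(N-i)\delta$ and $(\mathbf{P}_\sigma\bsmu)_i=(N-i)\delta$; undoing the permutation $\mathbf{P}_\sigma$ translates the latter into the closed form $\mu_i=\sigma(N-i)\delta$ that appears in~\eqref{eqn_LowerBound}. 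Because every coefficient $N-i$ and $\sigma(N-i)$ lies in $\{1,\dots,N-1\}$, the vectors $\bslambda$ and $\bsmu$ are componentwise nonnegative, so the pair is dual-feasible.

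Inserting these multipliers into the Lagrangian gives
$$\cL_F(\tilde\bsx_\sigma,\tilde\bsy_\sigma,\bslambda,\bsmu)=F(\tilde\bsx_\sigma,\tilde\bsy_\sigma)-\delta\sum_{i=1}^{N-1}\bigl((N-i)v_i(\tilde\bsx_\sigma)+\sigma(N-i)w_i(\tilde\bsy_\sigma)\bigr),$$
which is precisely the right-hand side of \eqref{eqn_LowerBound}, establishing the first inequality. For the looser bound \eqref{eqn_LowerBound2}, I would bound each coefficient by $N-1$ and telescope: using $\tilde x_0=\tilde y_0=0$, one has $\sum_i v_i(\tilde\bsx_\sigma)=\tilde x_{N-1}\leq 1$ and $\sum_i w_i(\tilde\bsy_\sigma)=\tilde y_{\sigma(N-1)}\leq 1$, so the full sum is at most $2(N-1)$, which is strictly less than $N^2$ for every $N\geq 2$.

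The only real obstacle is the bookkeeping around the permutation matrix $\mathbf{P}_\sigma$: the indexing convention used when deriving $\nabla_\bsy\cL_F$ must be tracked carefully so that inverting $\bsB\mathbf{P}_\sigma\bsmu=\delta\bsone$ actually yields $\mu_i=\sigma(N-i)\delta$ rather than some other reindexing of the same nonnegative values. Once this is pinned down consistently, both inequalities follow immediately from Wolfe duality together with the telescoping estimate.
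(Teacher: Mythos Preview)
Your proposal is correct and follows essentially the same route as the paper: choose the Lagrange multipliers via $\bslambda=\bsB^{-1}\nabla_\bsx F$ and $\bsmu={\bf P}_\sigma^{-1}\bsB^{-1}\nabla_\bsy F$, observe that $\bsB^{-1}\delta\bsone$ has entries $(N-i)\delta>0$, and invoke Wolfe duality. The only difference is in the crude bound \eqref{eqn_LowerBound2}: the paper bounds each $|v_i|,|w_i|\le 1$ and sums the coefficients $2\sum_{i=1}^{N-1}i<N^2$, whereas you bound each coefficient by $N-1$ and telescope $\sum_i v_i=\tilde x_{N-1}\le 1$, $\sum_i w_i=\tilde y_{\sigma(N-1)}\le 1$; your estimate $2(N-1)$ is actually sharper but lands at the same conclusion.
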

\begin{proof}
Choosing 
\begin{equation} \label{eqn_multipl}
	\bslambda = \bsB^{-1} \nabla_{\bsx} F(\tilde{\bsx}_\sigma, \tilde{\bsy}_\sigma) \quad \text{ and } \quad \bsmu = {\bf P}_\sigma^{-1}  \bsB^{-1}  \nabla_{\bsy} F(\tilde{\bsx}_\sigma, \tilde{\bsy}_\sigma) 
\end{equation}
yields 
\begin{equation} 
	\nabla_{\bsx} F(\tilde{\bsx}, \tilde{\bsy}) = \bsB \bslambda \quad \text{ and } \quad \nabla_{\bsy} F(\tilde{\bsx}, \tilde{\bsy}) =  \bsB {\bf P}_\sigma \bsmu .
\end{equation}
A short computation shows that the inverse of $\bsB$ from \eqref{eqnB} is given by
\[
	\bsB^{-1} := \begin{pmatrix}
	        	1 & 1 &  \ldots & 1 \\
	        	0 & 1 & \ldots & 1 \\
	        	\vdots & 0 & \ddots & \vdots \\
	        	0 & \ldots & 0 & 1  \\
	        \end{pmatrix} \in \R^{(N-1) \times (N-1)} ,
\]
which yields $\bsy, \bslambda >0$ and hence by Wolfe duality gives \eqref{eqn_LowerBound}. The second inequality \eqref{eqn_LowerBound2} then follows from noting that both $|v_i(\bsx)|$ and $|w_i(\bsy)|$ are bounded by $1$ and $2 \sum_{i=1}^{N-1} \sigma(N-i) =  2\sum_{i=1}^{N-1} i$ = $(N-1)(N-2) < N^2$. \qed
\end{proof}
Now, suppose we had some candidate $(\bsx^*, \bsy^*)\in D_{\sigma^*}$ for an optimal point set. If we can find for all other $\sigma \in \mathfrak{C}_N$ points $(\tilde{\bsx}_\sigma, \tilde{\bsy}_\sigma)$ that fulfills \eqref{eqn_offset_grad} and  
$$F(\tilde{\bsx}_\sigma, \tilde{\bsy}_\sigma) - \delta N^2 \geq F_\gamma(\bsx^*, \bsy^*)$$
for some $\delta > 0$, we can be sure that $D_{\sigma^*}$ is (up to torus symmetry) the unique domain $D_\sigma$ that contains the globally optimal point set.

\section{Numerical Investigation of Optimal point sets}
In this section we numerically obtain optimal point sets with respect to the worst case error in $H^1_\text{mix}$. Moreover, we present a proof by exhaustion that these point sets are indeed approximations to the unique  (modulo torus symmetry) minimizers of $F_\gamma$. 
Since integration lattices are local minima,  if the $D_\sigma$ containing the global minimizer corresponds to an integration lattice, this integration lattice is the exact global minimizer.

\subsection{Numerical Minimization with Alternating Directions}
In order to obtain the global minimum $(\bsx^*, \bsy^*)$ of $F_\gamma$ we are going to compute
\begin{equation}
	\sigma^* := \argmin_{\sigma \in \mathfrak{C}_N} \min_{(\bsx, \bsy) \in D_\sigma} F_\gamma(\bsx, \bsy),
\end{equation}
where the inner minimum has a unique solution due to Proposition \ref{prop_convexity}.  Moreover, since $D_\sigma$ is a convex domain we know that the local minimum of $F_\gamma(\bsx, \bsy)_{|D_{\sigma}}$ is not on the boundary. Hence we can restrict our search for optimal point sets to the interior of $D_\sigma$, where $F_\gamma$ is differentiable. 

Instead of directly employing a local optimization technique, we will make use of the special structure of $F_\gamma$.
While $F_\gamma(\bsx, \bsy)_{|D_{\sigma}}$ is a polynomial of degree four, the functions
\begin{equation} \label{eqn_quadpoly}
 \bsx \mapsto F_\gamma(\bsx, \bsy_0)_{|D_{\sigma}} \quad \text{ and } \quad \bsy \mapsto F_\gamma(\bsx_0, \bsy)_{|D_{\sigma}},
\end{equation}
where one coordinate direction is fixed, are quadratic polynomials, which have unique minima in $D_\sigma$. We are going to use this property within an alternating minimization approach. This means, that the objective function $F$ is not minimized along all coordinate directions simultaneously, but  with respect to certain successively  alternating blocks of coordinates. If these blocks have size one this method is usually referred to as \emph{coordinate descent} \cite{LuoTseng} or nonlinear Gau\ss -Seidel method \cite{Grippo2000}. It is successfully employed in various applications, like e.g. expectation maximization or tensor approximation \cite{McLachlan, Uschmajew}.

In our case we will alternate between minimizing $F_\gamma(\bsx, \bsy)$ along the first coordinate block $\bsx \in (0,1)^{N-1}$ and the second one $\bsy \in (0,1)^{N-1}$, which can be done exactly due to the quadratic polynomial property of the partial objectives \eqref{eqn_quadpoly}. The method is outlined in Algorithm \ref{algo_ao}, which for threshold-parameter $\delta=0$ approximates the local minimum of
 $F_\gamma$ on $D_\sigma$. For $\delta>0$ it obtains feasible points that fulfill \eqref{eqn_offset_grad}, i.e. $\nabla_{(\bsx, \bsy)} F_\gamma = (\delta, \ldots, \delta)= \delta {\bf 1}$.
Linear convergence of the alternating optimization method for strictly convex functions was for example proven in \cite{Ortega, bezdek87}.

\begin{algorithm}[tb] 
    \textbf{Given:} Permutation $\sigma \in \mathfrak{C}_N$, tolerance $\varepsilon > 0$ and off-set $\delta \geq 0$. \\
   \textbf{Initialize:} 
    \begin{enumerate}
	    \item $\bsx^{(0)} := (0, \frac{1}{N}, \ldots, \frac{N-1}{N})$ and $\bsy^{(0)} = (0, \frac{\sigma(1)}{N}, \ldots, \frac{\sigma(N-1)}{N})$.
	    \item $k:=0$.
    \end{enumerate}
    
    \Repeat{ $ \sqrt{ \|\nabla_{\bsx}\|^2 + \|\nabla_{\bsy}\|^2 } < \varepsilon$ }{
	  \vspace{-0.3cm}
	\begin{enumerate}
	    \item compute $\bsH_{\bsx} := \left( \partial_{x_i} \partial_{x_j} F_\gamma(\bsx^{(k)}, \bsy^{(k)} \right)_{i,j=1}^N$ and $\nabla_{\bsx} = \left(  \partial_{x_i} F_\gamma(\bsx^{(k)}, \bsy^{(k)} \right)_{i=1}^N$ by \eqref{eqn_second_deriv} and \eqref{eqn_first_deriv}.
	    \item Update $\bsx^{(k+1)} := \bsH_{\bsx}^{-1} \left( \nabla_{\bsx} + \delta \mathbf{1} \right) $ via Cholesky factorization.
	    \item compute $\bsH_{\bsy} := \left( \partial_{y_i} \partial_{y_j} F_\gamma(\bsx^{(k+1)}, \bsy^{(k)} \right)_{i,j=1}^N$ and $\nabla_{\bsy} = \left(  \partial_{y_i} F_\gamma(\bsx^{(k+1)}, \bsy^{(k)} \right)_{i=1}^N$.
	    \item Update $\bsy^{(k+1)} := \bsH_{\bsy}^{-1} \left( \nabla_{\bsy} + \delta \mathbf{1} \right)$  via Cholesky factorization.
	    \item $k := k+1$.
	\end{enumerate}
    }
    \textbf{Output}: point set $(\bsx, \bsy) \in D_\sigma$ with  $\nabla_{\bsx} F_\gamma(\bsx, \bsy) \approx \delta \mathbf{1}$ and $\nabla_{\bsy} F_\gamma(\bsx, \bsy) \approx \delta \mathbf{1}$.
\caption{Alternating minimization algorithm. For off-set $\delta=0$ it finds local minima of $F_\gamma$. For $\delta > 0$ it obtains feasible points used by Algorithm \ref{algo_lowerbound}. } \label{algo_ao}
\end{algorithm}

\subsection{Obtaining Lower Bounds}
By now we are able to obtain a point set $(\bsx^*, \bsy^*) \in D_{\sigma^*}$ as a candidate for a global minimum of $F_\gamma$ by finding local minima on each $D_\sigma, \sigma \in \mathfrak{C}_N$. On first sight we can not be sure that we chose the right $\sigma^*$, because the value of $\min_{(\bsx, \bsy) \in D_\sigma} F_\gamma(\bsx, \bsy)$ can only be computed numerically. 

On the other hand, Theorem \ref{thm_lower_bound} allows to compute lower bounds for all the other domains $D_\sigma$ with $\sigma \in \mathfrak{C}_N$. If we were able to obtain for each $\sigma$ a point $(\tilde{\bsx}_\sigma, \tilde{\bsy}_\sigma)$, such that 
$$\min_{(\bsx,\bsy) \in D_{\sigma^*}} F_\gamma(\bsx,\bsy) \approx \theta_N := F_\gamma(\bsx^*,\bsy^*) < \cL_F(\tilde{\bsx}_\sigma, \tilde{\bsy}_\sigma) - 2N^2 \delta \leq F_\gamma(\bsx, \bsy),$$
we could be sure that the global optimum  is indeed located in $D_{\sigma^*}$ and $(\bsx^*, \bsy^*)$ is a good approximation to it. Luckily, this is the case. Of course  certain computations can not be done in standard double floating point arithmetic. Instead we use arbitrary precision rational number  (APR) arithmetic from the GNU Multiprecision library GMP from \url{http://www.gmplib.org}. Compared to standard floating point arithmetic in double precision this is very expensive, but it has only to be used at certain parts of the algorithm.
The resulting procedure is outlined in Algorithm \ref{algo_lowerbound}, where we marked those parts which require APR arithmetic.

\begin{algorithm}[tb] 

   \textbf{Given:} Optimal point candidate $\cP_N:= (\bsx^*, \bsy^*) \in D_\sigma$ with $\sigma \in \mathfrak{C}_N$, tolerance $\varepsilon > 0$ and off-set $\theta \geq 0$. \\
   \textbf{Initialize:} 
    \begin{enumerate}
	    \item Compute $\theta_N := F_\gamma(\bsx^*, \bsy^*)$ \begin{color}{red} (in APR arithmetic) \end{color}.
	    \item $\Xi_N := \emptyset$.
    \end{enumerate}
    
    \For{ \textbf{\emph{all}} \,\,  $\sigma \in \mathfrak{C}_N$ }{
	\begin{enumerate}
	    \item Find $(\tilde{\bsx}_\sigma, \tilde{\bsy}_\sigma) \in D_\sigma$ s.t. $\nabla_{(\bsx, \bsy)}F_\gamma(\tilde{\bsx}_\sigma, \tilde{\bsy}_\sigma) \approx \delta \mathbf{1}$ by Algorithm \ref{algo_ao}.
	    \item Compute $\bslambda := \bsB^{-1} \nabla_{\bsx} F(\tilde{\bsx}_\sigma, \tilde{\bsy}_\sigma) \quad \text{ and } \quad \bsmu := {\bf P}_\sigma^{-1}  \bsB^{-1}  \nabla_{\bsy} F(\tilde{\bsx}_\sigma, \tilde{\bsy}_\sigma)$ \begin{color}{red} (in APR arithmetic) \end{color}. 
	    \item Verify $\bslambda, \bsmu > 0$.
	    \item Evaluate $\beta_\sigma := \cL_{F_\gamma}(\tilde{\bsx}_\sigma, \tilde{\bsy}_\sigma, \bslambda, \bsmu)$ \begin{color}{red} (in APR arithmetic) \end{color}.
	    \item \textbf{If} ( $\beta_\sigma \leq \theta_N$ )  $\Xi_N := \Xi_N \cup \sigma$.
	\end{enumerate}
    }
    \textbf{Output}: Set $\Xi$ of permutations $\sigma$ in which $D_\sigma$ contained a lower bound smaller than $\theta_N$.
\caption{Computation of lower bound on $D_\sigma$.} \label{algo_lowerbound}
\end{algorithm}

\subsection{Results}

%\marginpar{Hier muss man anpassen, falls Fig. 2 raus soll.}
In Figures \ref{fig_plots1} and \ref{fig_plots2} the optimal point sets for $N=2,\ldots, 16$ and both $\gamma =1$ and $\gamma =6$ are plotted. It can be seen that they are close to lattice point sets, which justifies using them as start points in Algorithm \ref{algo_ao}. The distance to lattice points seems to be small if $\gamma$ is small.

In Table \ref{table_results} we list the permutations $\sigma$ for which $D_\sigma$ contains an optimal set of cubature points. In the second column the total number of semi-canonical permutations $\mathfrak{C}_N$ that had to be considered is shown. It grows approximately like $\frac{1}{2}(N-2)!$. Moreover, we computed the minimal worst case error and periodic $L_2$-discrepancies.

In some cases we found more than one semi-canonical permutation $\sigma$ for which $D_\sigma$ contained a point set which yields the optimal worst case error.  Nevertheless, they represent equivalent permutations.
In the following list, the torus symmetries used to show the equivalency of the permutations are given. All operations are modulo 1.
\begin{itemize}
 \item $N=7$:  $(x,y) \mapsto (1-y,x)$ 
%      Changing $x$ and $y$ coordinates which amounts to replacing $\sigma$ by $\sigma^{-1}$ leads from $\sigma = ( 0 2 4 6 1 3 5 )$ to
%      the equivalent permutation $\sigma^{-1} = (0 4 1 5 2 6 3)$. Now the reflection $x \mapsto 1-x \mod 1$ which amounts to replacing the permutation 
%$ (0 \sigma(1) \sigma(2) \dots \sigma(N-1) )$ with $ (0 \sigma(N-1) \sigma(N-2) \dots \sigma(1) )$ leads from $\sigma^{-1} = (0 4 1 5 2 6 3)$ to $\sigma'=( 0 3 6 2 5 1 4 )$. 
% So, altogether the torus symmetry $(x,y) \mapsto (1-y,x)$ shows the equivalence.
 \item $N=9$:  $(x,y) \mapsto (y-2/9, x-1/9)$
%Here, also a shift is neccesary. The torus symmetry showing the equivalence is 
 \item $N=11$: $(x,y) \mapsto (y+5/11 , x-4/11 )$
 \item $N=14$: $(x,y) \mapsto (x-4/14 , y+6/14 )$
 \item $N=15$: $(x,y) \mapsto (y+3/15 , x+2/15 ), (y-2/15 , 12/15-x ), (y-6/15 , 4/15-x ) $
 \item $N=16$: $(x,y) \mapsto (1/16-x , 3/16-y )$
\end{itemize}
In all the examined cases $N \in \{2,\ldots, 16\}$ Algorithm 2 produced sets $\Xi_N$ which contained exactly the permutations that were previously obtained by Algorithm \ref{algo_ao} and are listed in Table 1. Thus we  can be sure, that the respective $D_\sigma$ contained minimizers of $F_\gamma$, which on each $D_\sigma$ are unique. Hence we know that our numerical approximation of the minimum is close to the true global minimum, which (modulo torus symmetries)  is unique. In the cases $N=1,2,3,5,7,8,12,13$ the obtained global minima are integration lattices.

\begin{table}[t]
	\begin{tabular}{r|c|c|c|l|c}
	    $N$ & $| \mathfrak{C}_N |$ & ${\rm wce}(H^{1,1}_\text{mix},\cP_N^*)$ & $D_2 ({\cP}_N^*)$ & $\sigma^*$ & Lattice \\
	    \hline
	    \bf 1 & 0 & 0.416667 & 0.372678 & (0) & \checkmark \\
	    \hline
	    \bf 2 & 1 & 0.214492 & 0.212459 & (0 1) & \checkmark \\
	    \hline
	    \bf 3 & 1 & 0.146109 & 0.153826 & (0 1 2) & \checkmark \\
	    \hline
	    4 & 2 & 0.111307 & 0.121181 & (0 1 3 2) &  \\
	    \hline
	    \bf 5 & 5 & 0.0892064 & 0.0980249 & (0 2 4 1 3) & \checkmark \\
	    \hline
	    6 & 13 & 0.0752924 & 0.0850795 & (0 2 4 1 5 3) &  \\
	    \hline
	    7 & 57 & 0.0650941 & 0.0749072 & (0 2 4 6 1 3 5), (0 3 6 2 5 1 4) &  \checkmark \\
	    \hline
	    \bf 8 & 282 & 0.056846 & 0.0651562 & (0 3 6 1 4 7 2 5) & \checkmark \\
	    \hline
	    9 & 1,862 & 0.0512711 & 0.0601654 & (0 2 6 3 8 5 1 7 4), (0 2 7 4 1 6 3 8 5) &  \\
	    \hline
	    10 & 14,076 & 0.0461857 & 0.054473 & (0 3 7 1 4 9 6 2 8 5) &  \\
	    \hline
	    11 & 124,995 &  0.0422449 & 0.050152 & \begin{minipage}{0.4\linewidth}(0 3 8 1 6 10 4 7 2 9 5),\\ (0 3 9 5 1 7 10 4 8 2 6) \end{minipage} &  \\
	    \hline
	    12 & 1,227,562 & 0.0370732 & 0.0456259 & (0 5 10 3 8 1 6 11 4 9 2 7) &  \checkmark \\
	    \hline
	    \bf 13 & 13,481,042 & 0.0355885 & 0.0421763 & (0 5 10 2 7 12 4 9 1 6 11 3 8) & \checkmark \\
	    \hline
	    14 & 160,456,465 & 0.0333232 & 0.0400524 & \begin{minipage}{0.4\linewidth}(0 5 10 2 8 13 4 11 6 1 9 3 12 7),\\  (0 5 10 3 12 7 1 9 4 13 6 11 2 8) \end{minipage}  &  \\
	    \hline
	    15 & 2,086,626,584 & 0.0312562 & 0.0379055 & \begin{minipage}{0.4\linewidth}(0 4 9 13 6 1 11 3 8 14 5 10 2 12 7),\\  (0 5 11 2 7 14 9 3 12 6 1 10 4 13 8),\\ (0 5 11 2 8 13 4 10 1 6 14 9 3 12 7),\\ (0 5 11 2 8 13 6 1 10 4 14 7 12 3 9) \end{minipage} &  \\
	    \hline
	    16 & 29,067,602,676 & 0.0294507 & 0.0359673 & \begin{minipage}{0.4\linewidth}(0 3 11 5 14 9 1 7 12 4 15 10 2 6 13 8),\\  (0 3 11 6 13 1 9 4 15 7 12 2 10 5 14 8)\end{minipage} &  \\
	    \hline
	\end{tabular}
	\caption{List of semi-canonical permutations $\sigma$, such that $D_\sigma$ contains an optimal set of cubature points for $N=1,\ldots, 16$. } \label{table_results}
\end{table}

\begin{figure}[t]
	\begin{tabular}{ccc}
		\includegraphics[width=0.33\linewidth, height=0.32\linewidth]{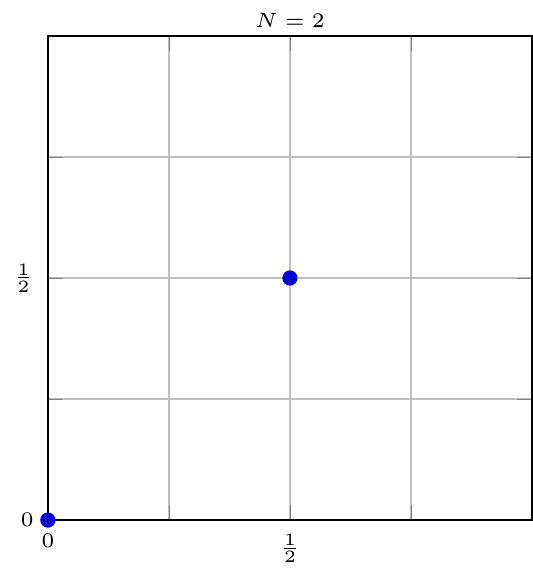} & \includegraphics[width=0.33\linewidth, height=0.32\linewidth]{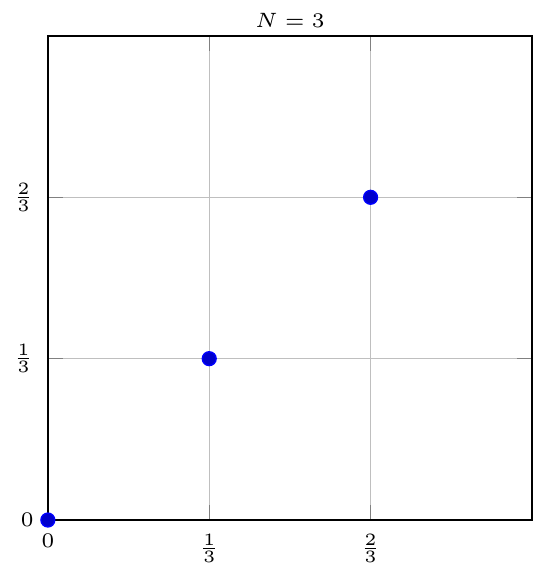} & \includegraphics[width=0.33\linewidth, height=0.32\linewidth]{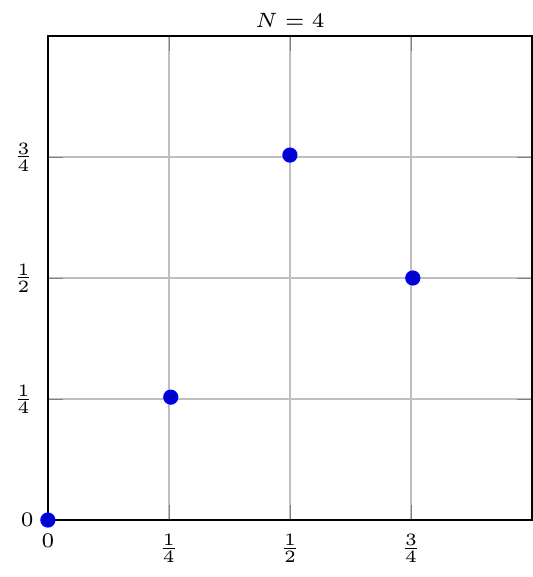} \\
		\includegraphics[width=0.33\linewidth, height=0.32\linewidth]{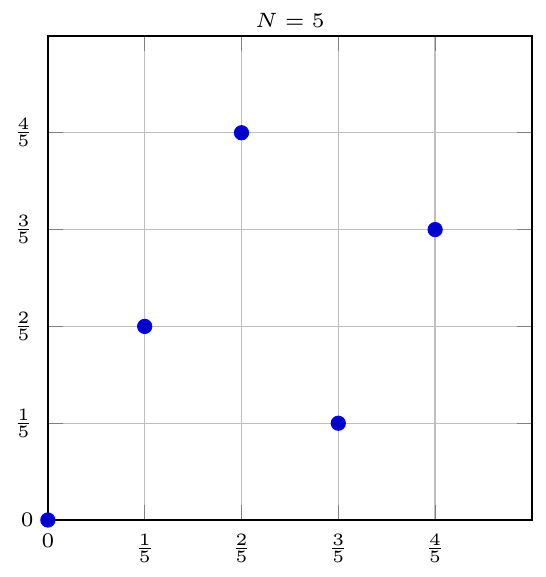} & \includegraphics[width=0.33\linewidth, height=0.32\linewidth]{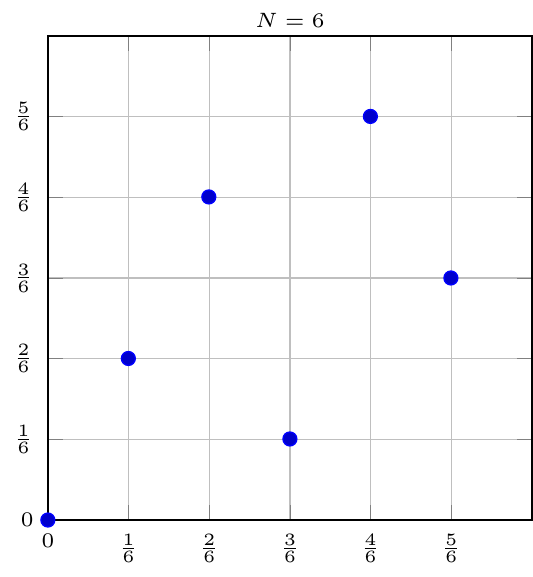} & \includegraphics[width=0.33\linewidth, height=0.32\linewidth]{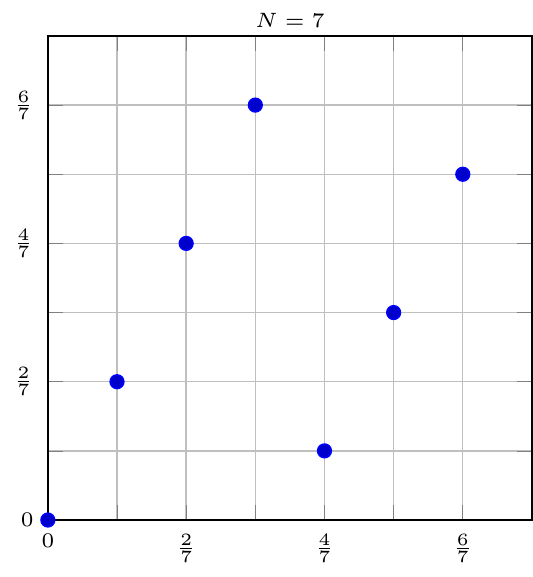} \\
		\includegraphics[width=0.33\linewidth, height=0.32\linewidth]{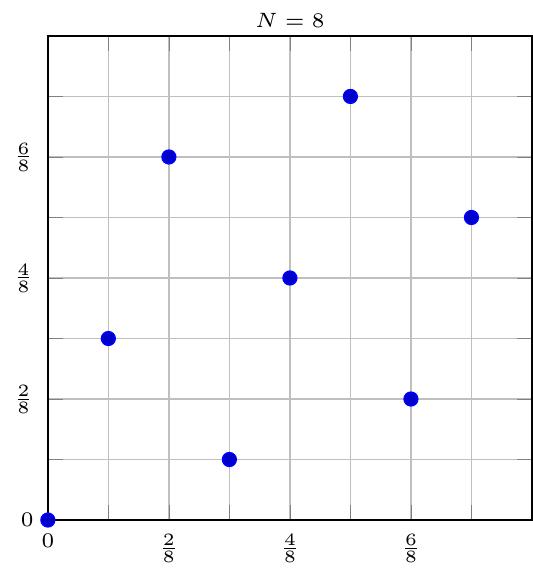} & \includegraphics[width=0.33\linewidth, height=0.32\linewidth]{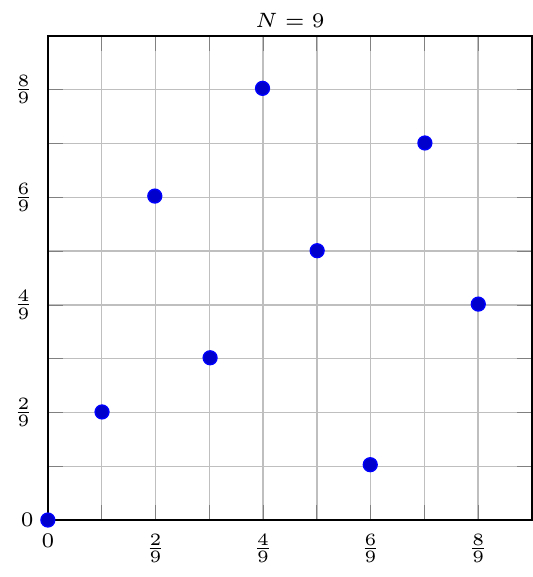} & \includegraphics[width=0.33\linewidth, height=0.32\linewidth]{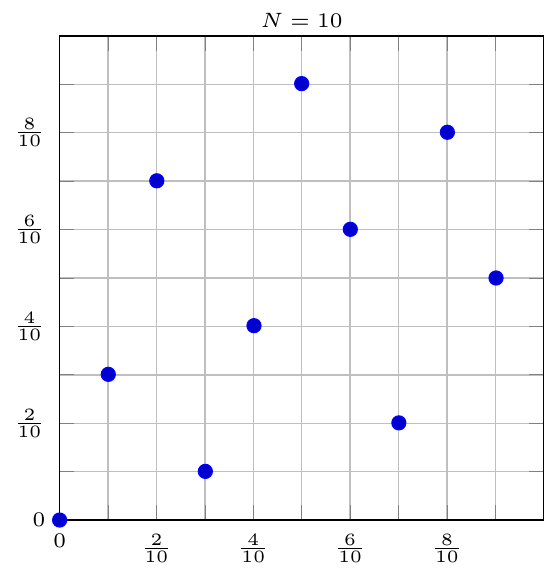} \\
		\includegraphics[width=0.33\linewidth, height=0.32\linewidth]{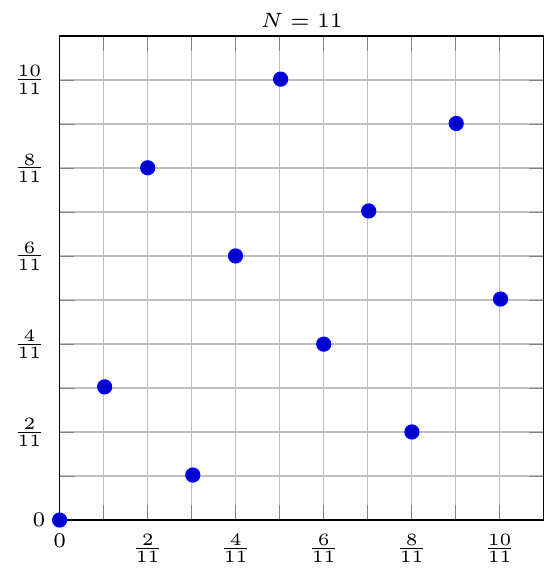} & \includegraphics[width=0.33\linewidth, height=0.32\linewidth]{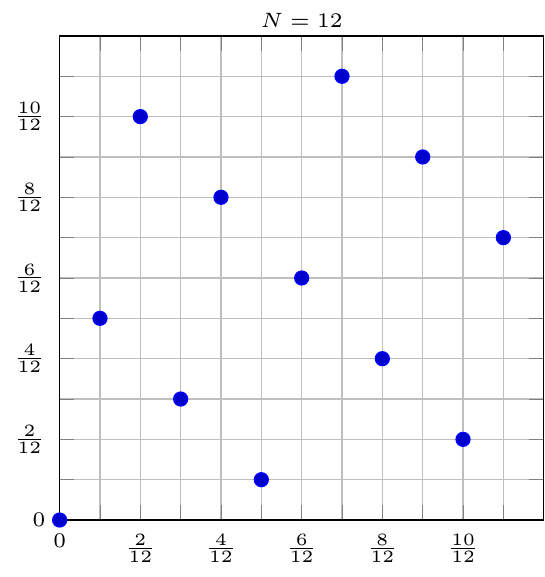} & \includegraphics[width=0.33\linewidth, height=0.32\linewidth]{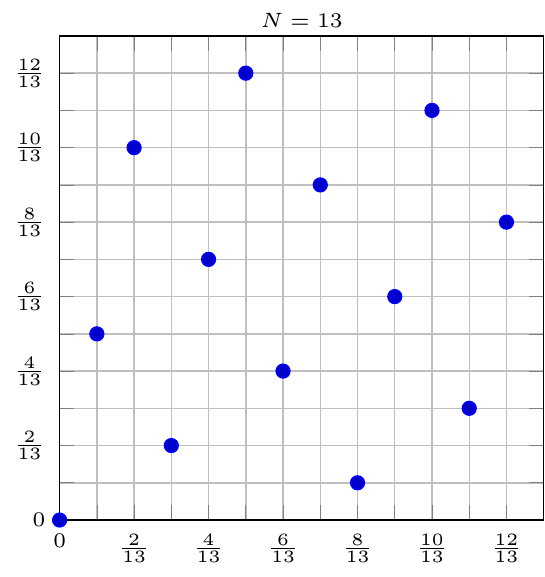} \\
		\includegraphics[width=0.33\linewidth, height=0.32\linewidth]{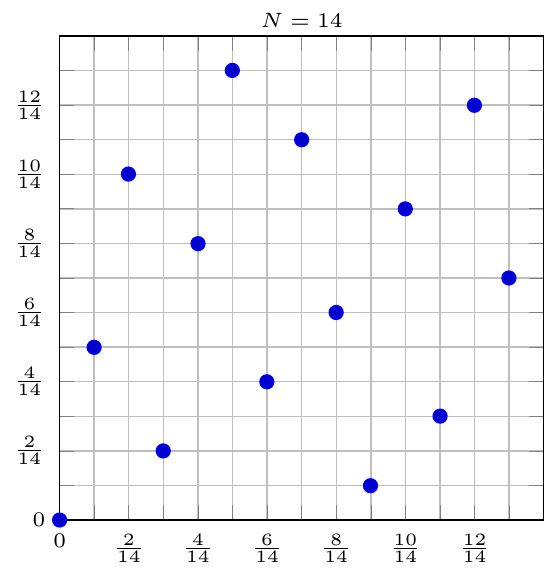} & \includegraphics[width=0.33\linewidth, height=0.32\linewidth]{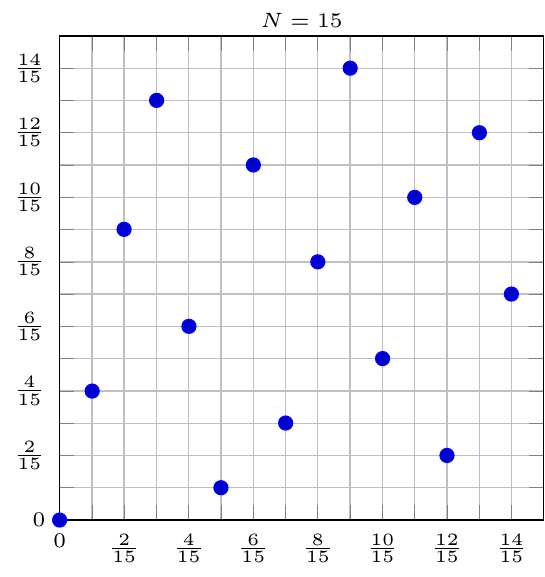} & \includegraphics[width=0.33\linewidth, height=0.32\linewidth]{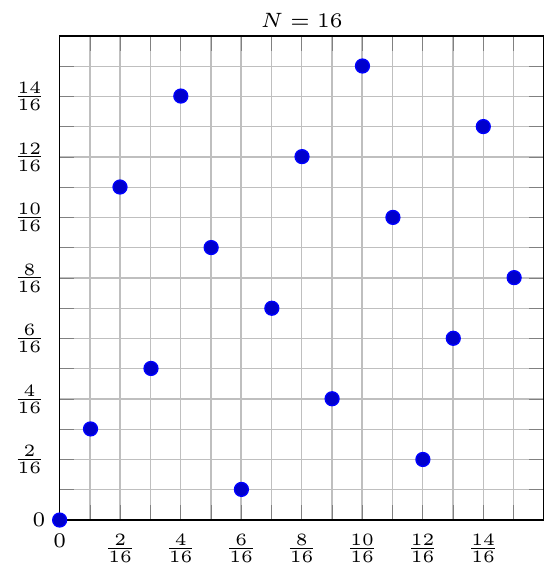} \\
	\end{tabular}
    \caption{Optimal point sets for $N=2, \ldots, 16$ and $\gamma = 1$.} \label{fig_plots1}
\end{figure}

\begin{figure}[t]
	\begin{tabular}{ccc}
		\includegraphics[width=0.33\linewidth, height=0.32\linewidth]{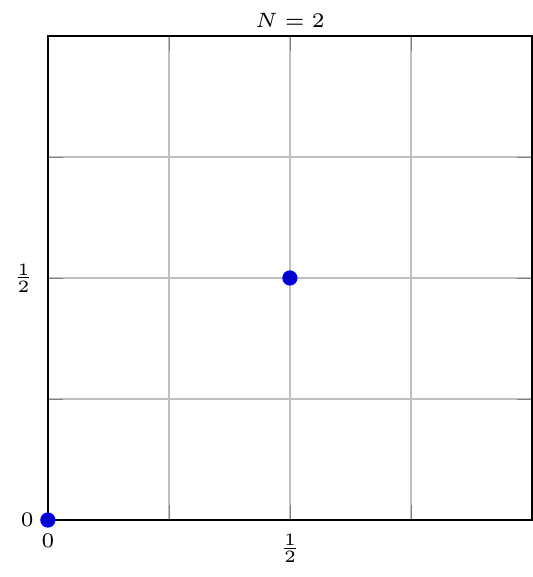} & \includegraphics[width=0.33\linewidth, height=0.32\linewidth]{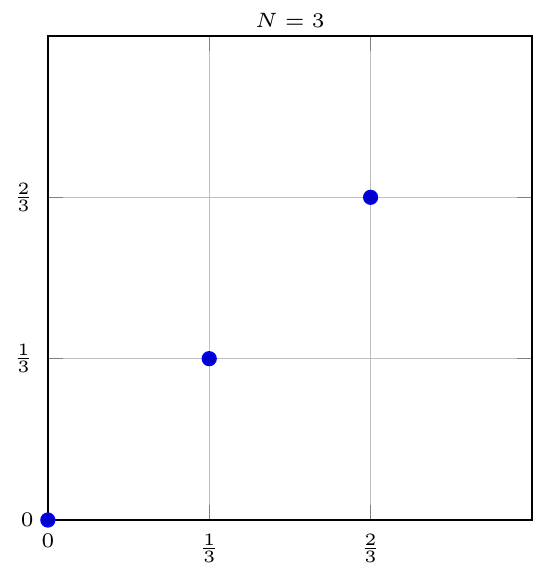} & \includegraphics[width=0.33\linewidth, height=0.32\linewidth]{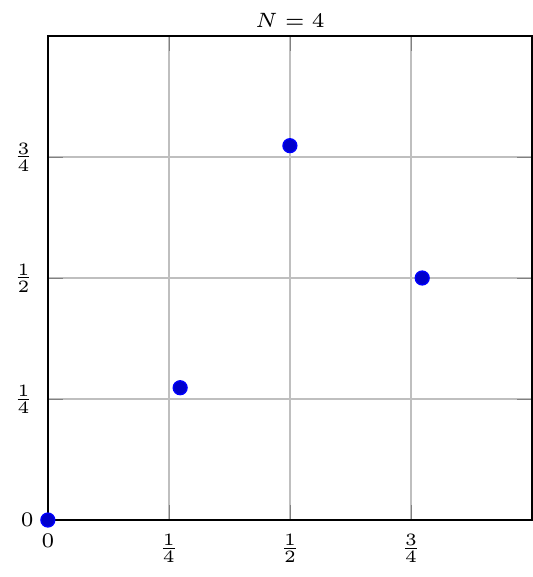} \\
		\includegraphics[width=0.33\linewidth, height=0.32\linewidth]{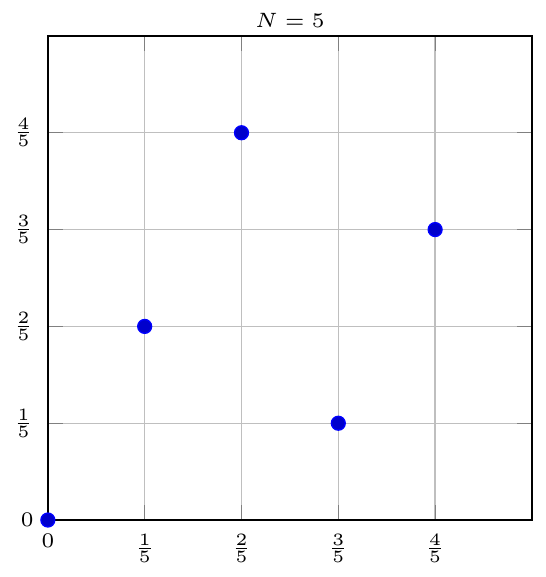} & \includegraphics[width=0.33\linewidth, height=0.32\linewidth]{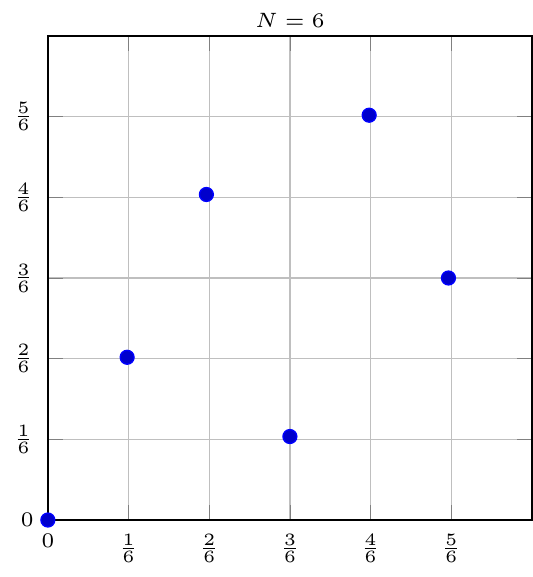} & \includegraphics[width=0.33\linewidth, height=0.32\linewidth]{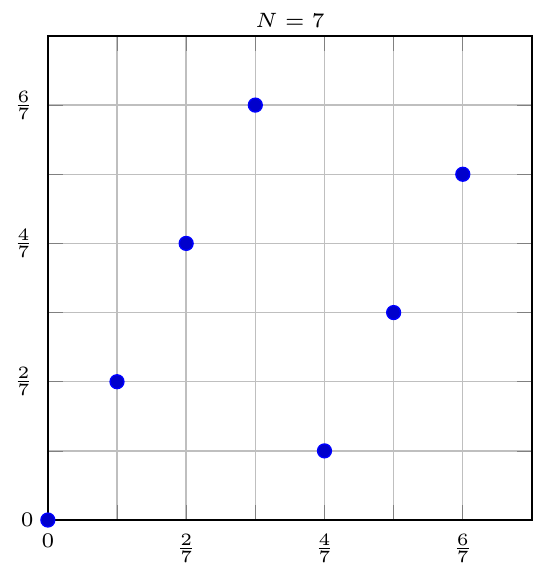} \\
		\includegraphics[width=0.33\linewidth, height=0.32\linewidth]{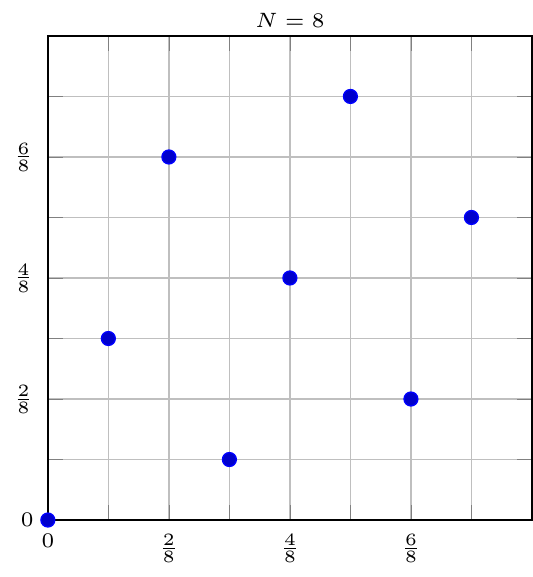} & \includegraphics[width=0.33\linewidth, height=0.32\linewidth]{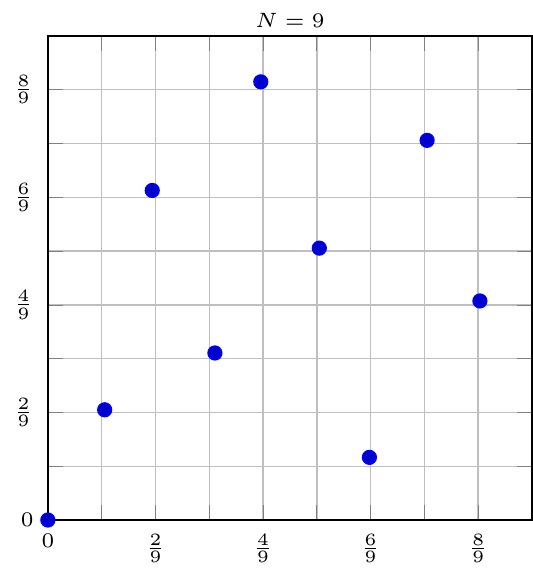} & \includegraphics[width=0.33\linewidth, height=0.32\linewidth]{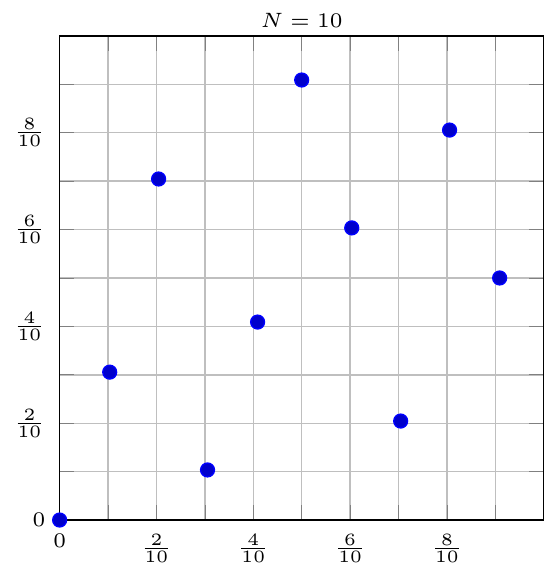} \\
		\includegraphics[width=0.33\linewidth, height=0.32\linewidth]{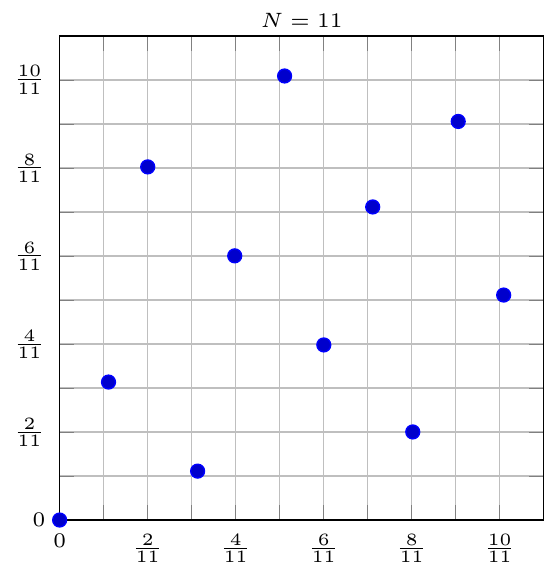} & \includegraphics[width=0.33\linewidth, height=0.32\linewidth]{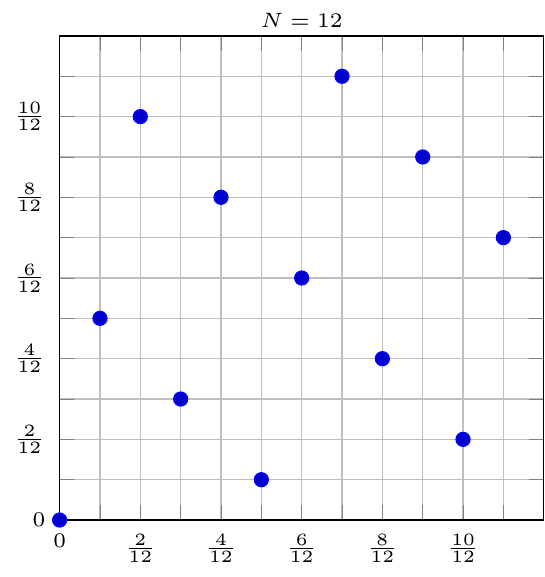} & \includegraphics[width=0.33\linewidth, height=0.32\linewidth]{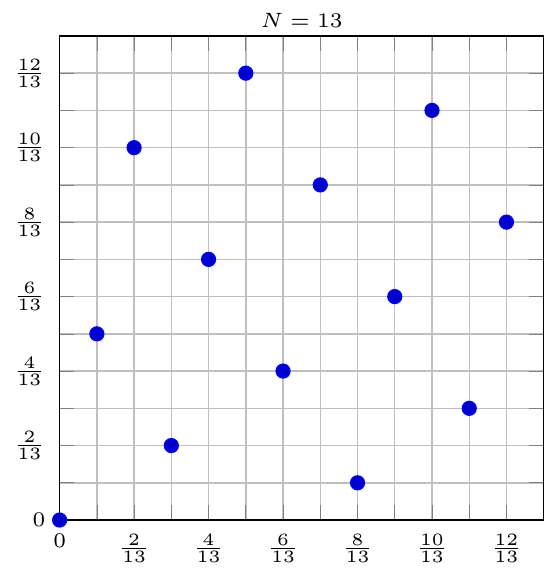} \\
		\includegraphics[width=0.33\linewidth, height=0.32\linewidth]{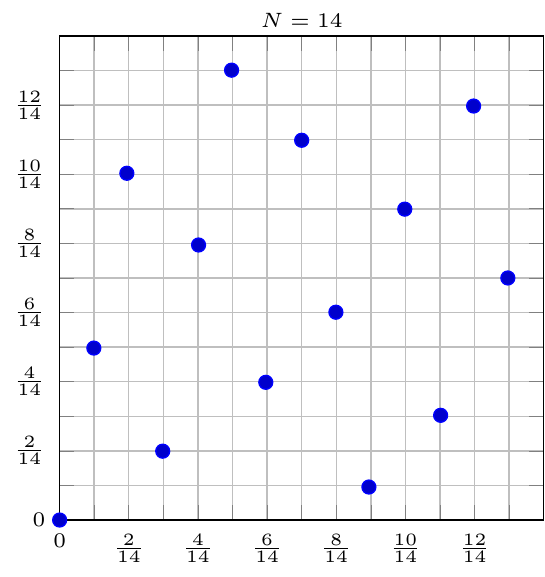} & \includegraphics[width=0.33\linewidth, height=0.32\linewidth]{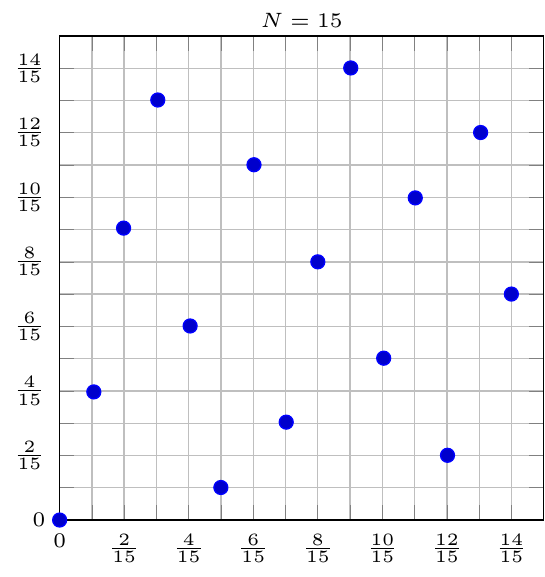} & \includegraphics[width=0.33\linewidth, height=0.32\linewidth]{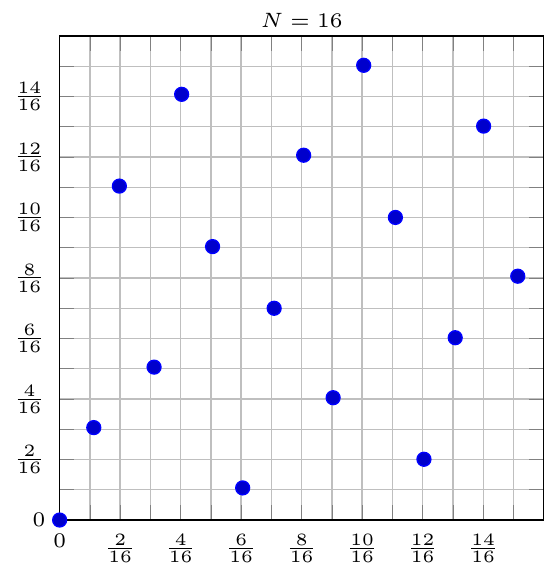} \\
	\end{tabular}
    \caption{Optimal point sets for $N=2, \ldots, 16$ and $\gamma = 6$.} \label{fig_plots2}
\end{figure}

\section{Conclusion}
In the present paper we computed optimal point sets for quasi--Monte Carlo cubature of bivariate periodic functions with mixed smoothness of order one by decomposing the required global optimization problem into approximately $(N-2)! / 2$ local ones. Moreover, we computed lower bounds for each local problem using arbitrary precision rational number arithmetic. Thereby we obtained that our approximation of the global minimum is in fact close to the real solution.

In the special case of $N$ being a Fibonacci number our approach showed that for $N \in \{1,2,3,5,8,13\}$ the Fibonacci lattice is the unique global minimizer of the worst case integration error in $H^1_\text{mix}$. We strongly conjecture that this is true for all Fibonacci numbers. Also in the cases $N=7,12$, the global minimizer is the obtained integration lattice.

In the future we are planning to prove that optimal points are close to lattice points. 
Moreover, we will investigate $H^r_\text{mix}$, i.e. Sobolev spaces with dominating mixed smoothness of order $r\geq 2$ and other suitable kernels and discrepancies.

%%%%%%%%%%%%%%%%%%%%%%%%%%%%%%%%%%%%%%%%%%%%%%%%%%%%%%%%%%%%%%%%%%%%%%%%%%%%%%%%%%%%%%%%%%%
%%% The acknowledgements
\begin{acknowledgement}
The authors thank Christian Kuske and Andr\'e Uschmajew for valuable hints and discussions. Jens Oettershagen was supported by the Sonderforschungsbereich 1060 \emph{The Mathematics of Emergent Effects} of the DFG.

\end{acknowledgement}

%%%%%%%%%%%%%%%%%%%%%%%%%%%%%%%%%%%%%%%%%%%%%%%%%%%%%%%%%%%%%%%%%%%%%%%%%%%%%%%%%%%%%%%%%%%
%%% The bibliography
%
% BibTeX users please use
% \bibliographystyle{spmpsci}
% \bibliography{mybibfile}
% and then copy paste the contents of the .bbl file here for the final version.
%
% E.g.:

\end{document}